\documentclass[12pt]{amsart}

\title{Some terminal orders on 3-folds}

\author{Daniel Chan}
\thanks{The authors were supported by the Australian Research Council Discovery Project grant  DP220102861}
\address{School of Mathematics and Statistics, 
UNSW Sydney, 
NSW 2052,
Australia
}
\email{danielc@unsw.edu.au}

\author{Colin Ingalls}

\usepackage{mymacros,amssymb}
\usepackage{amsmath}
\usepackage{amsfonts}
\usepackage{amscd}
\usepackage{hyperref}

\thanks{}

\begin{document}

\begin{abstract}
We produce the first known examples of non-trivial terminal local orders in the 3-dimensional case. We exhibit two different constructions. The first produces maximal orders with given toric ramification data. Since toric terminal ramification data in dimension 3 has been classified this produces toric examples. The second construction yields maximal orders ramified on surfaces with Kleinian singularities. These are automatically terminal. 
\end{abstract}

\maketitle

Throughout, $k$ denotes a field of characteristic zero that contains $\ell^{th}$ roots of unity $\mu_\ell$.

\section{Introduction} \label{sec:intro}

One of the highlights of algebraic geometry is Mori's classification of terminal singularities in dimension three. 
There is now a version of Mori's minimal model program for orders on varieties \cite{CI05}, \cite{11authors}. Much is known in dimension two, there is a complete classification of terminal, canonical and klt orders and their ramification data \cite{CHI}, \cite{CI21} (see Section~\ref{sec:back} for a review of these notions). By contrast, we know next to nothing in dimension three. The only previously known terminal orders were the commutative terminal singularities.  
Even examples of terminal ramification data was limited to the toric case \cite{11authors,basilThesis} where the combinatorics of toric geometry and toric studies of the Brauer group can be used. In this paper, we search for terminal orders by exploring two cases, i) toric ramification data and ii) when the ramification is on a Kleinian singularity or similarly mild singularity. 

We start by investigating toric ramification data over the polynomial ring $k[x_1,\ldots, x_n]$. In terms of orders, this means we are considering maximal orders over $k[x_1,\ldots,x_n]$ in a central simple $K$-algebra $Q$ where $K = k(x_1,\ldots, x_n)$ and $Q$ is a tensor product of symbols of the form $(x_i,x_j)^K_{\zeta}$ for $\zeta$ some $\ell$-th root of unity. We have the following construction by Proposition~\ref{prop:AQisMaximal}. 
\begin{theorem}
There exists a cross product algebra 
$$A_Q = k[y_1,\ldots, y_n] *_{c_Q} (\bZ/l\bZ)^{n(n-2)}$$ 
which is a maximal order in a central simple $K$-algebra, Morita equivalent to $Q$.
\end{theorem}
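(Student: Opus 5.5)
The plan is to construct $A_Q$ as a twisted group ring over the Kummer extension that splits $Q$, and then check maximality one height one prime at a time. Let $y_i = x_i^{1/\ell}$, so that $S = k[y_1,\ldots,y_n]$ is a free $k[x_1,\ldots,x_n]$-module and $L = k(y_1,\ldots,y_n)$ is Galois over $K$ with group $G$. I will take $G$ to be the group $(\bZ/\ell\bZ)^{\bullet}$ appearing in the statement, acting diagonally by $\ell$-th roots of unity on the $y_i$; when only a subset of the $x_i$ is needed to split $Q$, I will pass to the corresponding subextension. Each symbol $(x_i,x_j)_\zeta$ is split by $K(y_i)$, so $Q$ is split by $L$, and its class lies in $H^2(G,L^*)$.

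\textbf{Step 1: the cocycle.} I will write down an explicit normalized $2$-cocycle $c_Q$ with values in $\mu_\ell \subset k^* \subset S^*$. It is determined by an alternating bicharacter on $G$: for each symbol $(x_i,x_j)_\zeta$ in $Q$, the lifts $u_{\sigma_i}, u_{\sigma_j}$ of the generators of $G$ should satisfy $u_{\sigma_i}u_{\sigma_j} = \zeta\, u_{\sigma_j}u_{\sigma_i}$. I then set
\[
A_Q = S *_{c_Q} G .
\]
Then $A_Q \otimes K = L *_{c_Q} G$ is a crossed product and hence central simple over $K$. By comparing with the standard presentation of symbol algebras, its class in $\mathrm{Br}(K)$ is $[Q]$, since symbols are multiplicative under tensor product and inflation is compatible with this. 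This gives the Morita equivalence with $Q$ at the level of central simple algebras.

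\textbf{Step 2: reduction to height one.} $A_Q$ is a free $S$-module, hence a reflexive $k[x_1,\ldots,x_n]$-module. So by the Auslander--Goldman criterion it suffices to show that $(A_Q)_{\mathfrak p}$ is maximal for every height one prime $\mathfrak p$.
\begin{itemize}
\item If $\mathfrak p \neq (x_i)$ for all $i$, then $S_{\mathfrak p}$ is unramified, in fact Galois, over $k[x]_{\mathfrak p}$. The crossed product is therefore Azumaya and in particular maximal.
\item If $\mathfrak p = (x_i)$, the inertia group is $\langle \sigma_i \rangle \cong \bZ/\ell\bZ$. Since $\ell$ is invertible and $S_{\mathfrak p}$ is regular of dimension one, $(A_Q)_{\mathfrak p}$ is hereditary.
\end{itemize}

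\textbf{Step 3: maximality at $(x_i)$, the main obstacle.} A hereditary order is maximal exactly when it has a unique maximal two-sided ideal. Equivalently, the degree-$\ell$ cyclic inertia part must be absorbed by a totally ramified division component, so that $A/\mathrm{rad}\,A$ is simple rather than a product of $\ell$ blocks. Concretely, I would compute $\mathrm{rad}\,A_{\mathfrak p} = y_i A_{\mathfrak p}$ and
\[
A_{\mathfrak p}/y_i A_{\mathfrak p} \cong k(\bar y)[u_{\sigma_i}]/(u_{\sigma_i}^\ell - \ast) *_{c} (G/\langle\sigma_i\rangle).
\]
This quotient is simple precisely when conjugation by the $u_{\sigma_j}$ ($j\ne i$) permutes the $\ell$ idempotents of $k[u_{\sigma_i}]/(u_{\sigma_i}^\ell-1)$ transitively. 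That holds exactly when the bicharacter of $c_Q$ pairs $\sigma_i$ nontrivially, to order $\ell$, with the rest of $G$. In other words, the ramification of $Q$ along $x_i$, namely the residue $\prod_j x_j^{\pm a_{ij}}$, must have order $\ell$.

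This is where the size of $G$ and the choice of $c_Q$ matter. I would first try to arrange it by choosing $G$ and $c_Q$ so that the radical of the bicharacter is trivial, passing to a quotient group or an enlarged group if necessary. Then I would verify the transitivity of the idempotent action by a direct commutator computation.
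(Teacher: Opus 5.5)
Your Steps 2--3 follow the paper's skeleton: reflexive, Azumaya off the coordinate hyperplanes, hereditary at $(x_i)$ by Maschke, then simplicity of the top. However, the algebra you build is not the one in the statement. You take $G=\mathrm{Gal}(L/K)\cong(\bZ/\ell\bZ)^n$, whereas the statement's group is $(\bZ/\ell\bZ)^{n(n-2)}$, and these agree only for $n=3$. The $R$-automorphisms of $k[y_1,\dots,y_n]$ are exactly $\mathrm{Gal}(L/K)$. So for $n\ge 4$ a group of rank $n(n-2)>n$ cannot act faithfully ``diagonally on the $y_i$'', and your identification of $G$ with the group in the statement cannot be realised; your $S*_cG$ has degree $\ell^n$ rather than $\ell^{\binom{n}{2}}$.

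The missing idea is the paper's. Inside $\Lambda_Q=\bigotimes_{i<j}(x_i,x_j)^R_{q_{ij}}$, put $y_i=y_{i,i+1}$ and use the ratios $\tau_{ij}=y_{ij}y_i^{-1}$ ($j\neq i,i+1$) of the different $\ell$-th roots of $x_i$. These satisfy $\tau_{ij}^\ell=1$ and skew-commute. They therefore generate a $\mu_\ell$-central extension of $(\bZ/\ell\bZ)^{n(n-2)}$ acting \emph{non-faithfully} on $k[y_1,\dots,y_n]$ (only the $\tau_{i,i-1}$ move the $y$'s). Moreover $A_Q=k[y_1,\dots,y_n]\langle\tau_{ij}\rangle$ contains $\Lambda_Q$, so it is an order in $K\otimes_R\Lambda_Q$ itself. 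Hence no Brauer-class comparison like your Step~1 is needed. That step you only assert; it would require an explicit change of generators $u_{\sigma_i}\mapsto u_{\sigma_i}\prod_j y_j^{m_{ij}}$ making the $u$'s commute. For $n=3$ your construction is essentially the paper's.

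Second, Step 3 is left as a plan, and the plan is misdirected. The criterion you isolate is correct: the scalars by which $u_{\sigma_i}$ skew-commutes with the other $u_{\sigma_j}$ must generate $\mu_\ell$, i.e.\ the residue of $Q$ along $x_i$ has order $\ell$. But this is a property of $Q$, not something you can arrange by choosing $G$ or $c_Q$. When it fails, your own idempotent analysis shows $A_{\mathfrak p}/y_iA_{\mathfrak p}$ is not simple, so the order is not maximal. What is needed is the hypothesis of Proposition~\ref{prop:AQisMaximal} that every $q_{ij}$ is a primitive $\ell$-th root of unity, which you never invoke. With it, conjugation by a single $u_{\sigma_j}$, $j\neq i$, already permutes the $\ell$ idempotents transitively. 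This is the counterpart of the paper's computation that the centre of $A_Q/(y_i)$ is the domain $k[y_{ji}\,|\,j\in J]^H$.

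Your alternative of making the radical of the bicharacter trivial is neither necessary nor possible in the main case. For $n=3$, an alternating form on $(\bZ/\ell\bZ)^3$ with matrix $(c_{ij})$ kills $(c_{23},c_{31},c_{12})$, so its radical is never trivial. With the primitivity hypothesis added, your argument proves the statement for $n=3$. For $n\ge 4$ it proves a different, smaller-degree result rather than the one claimed.
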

In fact, we explicitly compute the 2-cocycle $c_Q$ in the proposition. This produces toric terminal orders corresponding to all the toric 3-dimensional ramification data of odd prime index $l$, classified in \cite[Proposition~5.4]{11authors}.

Turning to our second class of examples, we first note that an easy sufficient criterion for ramification data to be terminal is that the associated log variety $(X,D)$ is log terminal. In particular, this is the case if $X$ is smooth complete local of dimension 3, and $D$ has a canonical singularity so has the form $\Spec k[[u,v]]^G$ for some finite subgroup $G < SL_2$. The ramification of an order with such an associated log variety is given by a cyclic extension $\tilde{D} \to D$ subject to the following two obstructions arising from the coniveau spectral sequence \cite[Section~1.2]{CTHK} for the Brauer group of a variety. Firstly, $\Dtilde \to D$ should be \'etale in codimension one. This naturally suggests that one considers normal subgroups $N < G$ with cyclic quotient $G/N$ and set $\Dtilde = \Spec k[[x,y]]^N$. We call such ramification data {\em Kleinian}. It turns out the second obstruction vanishes so the interesting question is what are the corresponding maximal orders with these terminal ramification data.

We construct examples of such orders in all cases except one.  
\begin{theorem}  \label{thm:mainKleinian}
With the exception of the degree 4 cover $\tilde{D} \to D$ of a type $D_{2n+3}$ Kleinian singularity, every Kleinian ramification data of index $\ell$ is exhibited by a maximal order of the form 
\begin{equation*}
    \frac{k[[u,v,w]] \langle x, y \rangle}{(x^{\ell} -a, y^{\ell}-b, yx - \zeta xy - r)}
\end{equation*}
where $a,b,r \in k[[u,v,w]]$ and $\zeta \in \mu_{\ell}$. 
\end{theorem}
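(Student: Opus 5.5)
The plan is to use the classification of Kleinian ramification data and, for each case, write down an explicit quantum-plane-type order; its maximality and ramification are then read off from a local criterion. First I list the pairs $N \lhd G < SL_2$ with $G/N \cong \bZ/\ell$ cyclic, up to conjugacy. In type $A_{m-1}$, $G = \mu_m$ and $N = \mu_{m/\ell}$. In type $D$, the binary dihedral group of order $4m$ has cyclic quotients of order $2$ (three of them, or one when the abelianisation is $\bZ/4$) and, when the abelianisation is $\bZ/4$, also a quotient of order $4$. The binary tetrahedral group has a quotient $\bZ/3$ with kernel $Q_8$. The binary octahedral group has a quotient $\bZ/2$ with kernel the binary tetrahedral group. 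The binary icosahedral group is perfect and gives nothing. The abelianisation of the binary dihedral group is $\bZ/4$ precisely for the type $D_{2n+3}$ cases, and the order-4 quotient there is the excluded case.

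Next I fix the realisation. $D = \Spec k[[u,v,w]]/(f)$ is embedded as a Kleinian hypersurface in $\Spec k[[u,v,w]]$. A symbol algebra $(a,b)_\zeta$, or its deformation by $r$, is ramified along the divisors where $a$ and $b$ vanish, and the ramification along $D$ is the cyclic extension obtained by adjoining an $\ell$-th root of the appropriate tame-symbol unit. So in each case I choose $a,b,r$ with $f$ dividing $a$ (or a combination of the form $x^\ell y^\ell$-type norm), and with $b|_D$ a unit or function on $D$ whose $\ell$-th root generates $k[[x,y]]^N$ over $k[[x,y]]^G$. Concretely, $\Dtilde \to D$ is the quotient by $G/N \cong \mu_\ell$, so $k[[x,y]]^N = \bigoplus_i \mathcal{O}_D\, s_i$ with $s_1^\ell \in \mathcal{O}_D$, and $b$ should lift $s_1^\ell$. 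For type $A$ one can take $r=0$ and the honest symbol. For example, with $f = uv - w^m$ I take $a = u$ and $b = w$ (suitably adjusted). In the $D$, $E_6$ and $E_7$ cases the generator $s_1^\ell$ is typically one of the invariants. I expect to need the correction $r$ to kill extra ramification elsewhere, e.g.\ along $a=0$ off $D$, or to make the algebra reflexive/maximal. This is why the presentation includes $yx - \zeta xy - r$ rather than a pure symbol.

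Then I prove maximality. I would check that the order $A$ is reflexive, indeed free over $R = k[[u,v,w]]$ with basis $x^iy^j$ (a PBW/Gröbner argument for the relations), and apply the standard criterion: a reflexive order is maximal iff it is maximal at each height-one prime. Away from the ramification locus, $A$ is Azumaya if its discriminant is supported on the ramification curve. At the height-one prime $(f)$, I check that $A/\mathrm{rad}$ is the expected $\ell$-fold cyclic extension field algebra, i.e.\ the localisation is hereditary with ramification index $\ell$. This amounts to showing $x$ (or a suitable element) generates a two-sided ideal whose $\ell$-th power is $fA$ and whose quotient is $\mathcal{O}_{\Dtilde}$-like, a simple local computation.

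I expect the hardest step to be the explicit choice of $a,b,r$ for the $D$ and $E$ cases so that ramification occurs only along $D$ (terminality needs the ramification divisor exactly $D$) with the correct cover. I would first try to take $x^\ell = a$ with $a$ a defining invariant and $b$ the invariant whose root cuts out $N$, then use $r$ to cancel the spurious ramification of $(a,b)$ along $b=0$. That this fails for the degree $4$ cover of $D_{2n+3}$ would be consistent with the exception in the statement.
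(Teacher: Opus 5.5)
Your classification of the pairs $N \triangleleft G$ matches the paper's table. Your reduction of maximality to height-one primes, for an $R$-free order with basis $x^iy^j$, is also what the paper does. But the actual content of the theorem is not supplied: a choice of $a,b,r$ for each row, together with the identification of the ramification. The heuristics you give for it point the wrong way.

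Taking $r=0$ cannot work, even in type $A$. In an honest symbol, $x$ and $y$ are normal with $x^\ell=a$ and $y^\ell=b$. So the fibre at any height-one prime dividing $ab$ has a nonzero nilpotent ideal, and $A$ is not Azumaya there. A maximal honest symbol ramified only on $D$ would therefore need $a,b$ to be units times powers of $f$. Then the tame symbol along $D$ is a unit of $\mathcal{O}_D$, and by Hensel its Kummer cover is at most a constant field extension, never $\mathrm{Spec}\, k[[x,y]]^N \to D$. Your one concrete example, $(u,w)_\zeta$, is ramified along $u=0$ and $w=0$ and not along $uv=w^m$ at all.

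Similarly, asking for $f\mid a$ is the wrong normalisation. In the construction that works, neither $a$ nor $b$ vanishes on $D$: they become the $\ell$-th powers $x^\ell,y^\ell$ on $\tilde D$, and $(a,b)$ has height two.

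The missing idea is the element $p=xy+\frac{1}{\zeta-1}r$.
\begin{itemize}
\item One checks $px=\zeta xp$ and $yp=\zeta py$, so $p$ is normal, and $p^\ell = c := (-1)^{\ell-1}ab+(\zeta-1)^{-\ell}r^\ell\in R$.
\item Consequently $A[a^{-1}]\cong(a,c)_\zeta$ and $A[b^{-1}]\cong(c,b)_\zeta$. So $A$ is Azumaya off $V(a,b)\cup V(c)$, and the ramification divisor is $D=V(c)$. The element generating the radical at $(c)$ is $p$, not $x$.
\item Since $yx-xy=(\zeta-1)p$, the quotient $A/(p)\cong R[x,y]/(x^\ell-a,\,y^\ell-b,\,(\zeta-1)xy+r)$ is commutative.
\item If $c$ is prime and $A/(p)$ is a domain, then $A_{(c)}$ is hereditary with simple top. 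Hence $A$ is maximal, with ramification cover $A/(p)$.
\end{itemize}

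The theorem then reduces to choosing $a,b,r$ so that $c$ is the equation of $D$ and $A/(p)\cong k[[x,y]]^N$. For instance:
\begin{itemize}
\item $(a,b,r)=(u,v,w^n)$ gives $c=\pm uv+(\zeta-1)^{-\ell}w^{n\ell}$, of type $A_{n\ell-1}$, and $A/(p)\cong k[[x,y,w]]/((\zeta-1)xy+w^n)$, of type $A_{n-1}$.
\item $\ell=2$ and $(a,b,r)=(v,\,v^2-u^3,\,w)$ gives $E_7$, with cover $k[[x,y,u]]/(y^2-x^4+u^3)$ of type $E_6$.
\item Analogous choices handle $D_{n+2}$, $D_{2n}$ and $E_6$.
\end{itemize}
Without this computation, or an equivalent one, your plan does not establish any case of the theorem.
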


The next natural question would of course be to classify maximal orders with such Kleinian ramification. This seems quite a hard question so we pose the simpler question of uniqueness under the assumption of the order being Cohen-Macaulay. We obtain the following from Corollary~\ref{cor:unique} and Remark~\ref{rem:typeAunique}.

\begin{theorem}
Let $Q$ be division ring, finite dimensional over its centre $k((u,v,w))$ and ramified on a type $A_{\ell-1}$ Kleinian singularity with ramification given by the smooth cover. Then any Cohen-Macaulay maximal order in $Q$ is isomorphic to 
\begin{equation*}
        \frac{k[[u,v,w]] \langle x, y \rangle}{(x^{\ell} -u, y^{\ell}-v, yx - \zeta xy - w)}
\end{equation*}
where $\zeta$ is a primitive $\ell$-th root of unity.  
\end{theorem}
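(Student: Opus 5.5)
We need to show that any Cohen--Macaulay maximal order $\Lambda$ in $Q$ over $R = k[[u,v,w]]$ is isomorphic to the displayed algebra $A$. The strategy has three parts: first show that $A$ itself is a CM maximal order in $Q$; then show that all CM maximal orders in $Q$ are conjugate; and finally conclude.

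First we treat $A$. Filtering by degree in $x,y$, the associated graded of $A$ is a quantum affine space over $R$, and $A$ is free of rank $\ell^2$ over $R$ with basis $x^iy^j$, $0\le i,j<\ell$. Since $R$ is regular local, freeness makes $A$ Cohen--Macaulay (indeed reflexive). Its ramification lies on the discriminant. Computing with the norm form shows the ramification locus is $D=\{uv = c\,w^\ell+\dots\}$, which after a coordinate change is the $A_{\ell-1}$ surface $uv=w^\ell$. The cover is $k[[x,y]]$ with $x^\ell=u$, $y^\ell=v$, so it is the smooth cover; hence $A\otimes K \cong Q$ up to the ambiguity in the choice of $\zeta$ and symbol, which should be fixed by the Brauer class. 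Maximality of $A$ follows from the criterion that a reflexive order is maximal iff it is maximal at height one primes. At height one primes off $D$ the order $A$ is Azumaya. At the generic point of $D$ the reduction $A/\mathfrak{p}A$ is a cyclic algebra whose centre is the field extension given by $\tilde D$, so $A_\mathfrak{p}$ is hereditary, in fact the maximal order of a ramified local division algebra.

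Next we show uniqueness up to conjugacy. Let $\Lambda$ be another CM maximal order in $Q$. In dimension three a CM module over the regular ring $R$ is free, so $\Lambda$ is free over $R$. The key idea is to compare $\Lambda$ with $A$ via the reflexive bimodule $M=(\Lambda A)^{**}$, or alternatively via $\mathrm{Hom}_A(A,\Lambda)$-type lattices. The approach I would take is that of uniqueness results in dimension two. Over the punctured spectrum $U$, both orders are maximal orders with the same ramification. The Picard-type group classifying maximal orders in $Q$ on $U$ up to conjugation is controlled by reflexive rank one modules over $A$, that is, the class group of $A$ restricted to $U$. By the Auslander--Goldman type argument, $\Lambda \cong \mathrm{End}_A(P)$ for a reflexive $A$-module $P$ that is locally projective of rank one in codimension one.

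The main obstacle is to show that $P$ is free. I would first show that $\mathrm{End}_A(P)$ being CM forces $P$ to be CM: the Hom-module CM condition over a regular local ring of dimension three together with depth arguments should give this. The next step is to classify CM reflexive rank-one $A$-modules. This can be done by passing to the $\mathbb{Z}/\ell\times\mathbb{Z}/\ell$-graded picture. The algebra $A$ is a skew group ring of $k[[x,y,w']]$-type twisted by the Galois group, and its reflexive rank-one modules correspond to characters twisted by the class group of $D$, which is $\mathbb{Z}/\ell$. One must then check that the nontrivial ones have $\mathrm{End}$ not CM, or else have endomorphism ring isomorphic to $A$ via an automorphism; this is where the concrete cocycle computation is needed. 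This concludes by giving $\Lambda \cong A$, and Corollary~\ref{cor:unique} with Remark~\ref{rem:typeAunique} packages exactly this for the type $A$ case.
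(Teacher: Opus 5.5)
Your reduction matches the paper's. Take $P$ to be the reflexive hull of $\Lambda A$ in $Q$; then maximality of $\Lambda$ gives $\Lambda\cong\End_A(P)$, and everything comes down to showing that $P$ is free. \textbf{That decisive step is not proved.} Your claim that ``$\End_A(P)$ Cohen--Macaulay forces $P$ Cohen--Macaulay by depth arguments'' is the whole content of the theorem, and it does not follow from depth bookkeeping alone. It is the noncommutative analogue of the Auslander--Goldman theorem, whose proof relies on the Auslander--Buchsbaum formula and minimal resolutions, that is, on finite global dimension.

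The ingredient you never establish or use is that $A$ has global dimension $3$. With $p=xy+\frac{1}{\zeta-1}w$ one has $A/pA\cong k[[x,y]]$, which is regular, so Theorem~\ref{thm:defSymbolRamification}(4) applies. You also need that $A$ is local: $A/\mathfrak{m}A\cong k\langle x,y\rangle/(x^\ell,y^\ell,yx-\zeta xy)$, so $A/\text{rad}\,A=k$.

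The classification step you propose next is unnecessary and unfounded.
\begin{itemize}
\item It is unnecessary because, once $\text{gl.dim}\,A=3$ is known, Proposition~\ref{prop:AuslanderBuchsbaum} makes every Cohen--Macaulay $A$-module projective, hence free since $A$ is local. So there is nothing to classify.
\item It is unfounded because nothing you give makes $A$ a (twisted) skew group ring over a commutative ring like $k[[x,y,w']]$. In $A$, $x$ and $y$ do not commute. Moreover $y$ does not normalise $R[x]$, since $yx=\zeta xy+w$ and $w\notin R[x]y$ (look at the free basis $x^iy^j$). Only the localisations $A[u^{-1}]$ and $A[v^{-1}]$ are symbol algebras.
\end{itemize}
So the description of rank-one reflexives via characters and $\mathrm{Cl}(D)\cong\bZ/\ell$, and the promised cocycle computation, have no basis.

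The paper closes the gap as follows.
\begin{enumerate}
\item Since $P$ is reflexive, $\text{depth}\,P\ge 2$, so Proposition~\ref{prop:AuslanderBuchsbaum} gives $\text{pd}\,P\le 1$.
\item Suppose $\text{pd}\,P=1$. Because $A$ is local, its unique simple module is a quotient of $P$. Proposition~\ref{prop:detectProjDim} (a minimal resolution has differentials with entries in the radical) then gives $\Ext^1_A(P,P)\neq 0$.
\item But Proposition~\ref{prop:Ext1EE} gives $\Ext^1_A(P,P)=0$, because $\End_A(P)\cong\Lambda$ is Cohen--Macaulay. This contradiction forces $\text{pd}\,P=0$.
\item So $P$ is projective, hence free over the local ring $A$. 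It has rank one since $Q$ is a division ring, and therefore $\Lambda\cong\End_A(A)\cong A$.
\end{enumerate}
This is exactly Proposition~\ref{prop:NCCR}, Corollary~\ref{cor:unique} and Remark~\ref{rem:typeAunique}. Your closing appeal to them would be the right proof, but only after verifying the hypotheses your argument omits: global dimension $3$ and locality of $A$. The body of your proposal does not supply their content.
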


\section{Background on Ramification of Orders and MMP}  \label{sec:back}

In this section, we briefly review and give references for background material on ramification for orders on varieties and the minimal model program for them. More details can be found in \cite{CI05}, \cite{11authors} or \cite[Sections~2.2 and 2.3]{CHI}

Let $X$ be a normal variety with function field $K$ and $Q$ a central simple $K$-algebra. An {\em order on $X$ in $Q$} is a coherent subsheaf of $\cO_X$-algebras $A$ of $Q$ such that $KA = Q$. These can be ordered by inclusion so one may speak of {\em maximal orders} with respect to this partial order. 

Given a maximal order $A$, the {\em ramification} of $A$ along some prime divisor $D$ measures the failure of $A$ being Azumaya there. There are several ways to define it, the most useful in this paper is the following. For more details and proofs, the reader should consult \cite{AG}, \cite{CI05}. Let $\hat{A}_D$ be the complete localisation of $A$ at $D$. Then $\rad \hat{A}_D$ is a principal left and right ideal and in fact, we can find $\pi \in \rad \hat{A}_D$ such that $\pi \hat{A}_D = \hat{A}_D \pi = \rad \hat{A}_D$. If $\kappa(D)$ denotes the residue field of $D$, then $\bar{A}_D:= \hat{A}_D/ \pi \hat{A}_D$ is a simple $\kappa(D)$-algebra so $\hat{A}_D$ is Azumaya if and only if the centre $\tilde{\kappa}_D:= Z(\bar{A}_D) = \kappa(D)$. In general though, $\tilde{\kappa}_D$ is a cyclic field extension of $\kappa(D)$ where the action of a generator of the Galois group is given by $\alpha \mapsto \pi \alpha \pi^{-1}$. The {\em ramification of $A$ along $D$} is the cyclic field extension $\tilde{\kappa}_D/\kappa(D)$. We will also call the normalisation of $D$ in $\tilde{\kappa}_D$, the {\em ramification cover}. The degree $e_D$ of this cyclic extension is the called the {\em ramification index}. 

A maximal order $A$ can only ramify on a finite set of divisors, so it makes sense to attach the log variety $(X,\Delta_A)$ where 
\begin{equation*}
    \Delta_A := \sum_D \left(1 - \frac{1}{e_D}D\right) \quad \text{and so } \ \ K_A := K_X + \Delta_A.
\end{equation*}

One can now define the notion of discrepancy and thus terminal, canonical, log terminal etc. In particular, we have the following.

\begin{definition}  \label{def:terminal}
A maximal order $A$ on $X$ is {\em terminal} if $K_A$ is $\bQ$-Cartier and given any proper birational morphism $\pi \colon \Xtilde \to X$ and maximal order $\Atilde$ containing $\pi^*A$ we have 
\begin{equation*}
    K_{\Atilde} - \pi^* K_A \geq 0. 
\end{equation*}
\end{definition}

\section{Some terminal orders with toric ramification data}

In this section, we construct some maximal orders over the polynomial ring $k[x_1,\ldots, x_n]$ with ``toric'' ramification data. We also compute their ramification data. 

Let $R$ be a commutative noetherian normal domain. We will use {\em symbols} which we recall are $R$-algebras of the following form. Let $a,b \in R-0$ and $\zeta \in R$ be a primitive $\ell$-th root of unity. Then we let 
$$(a,b)^R_{\zeta} : = R\langle y,z \rangle /(y^{\ell} - a, z^{\ell} - b, zy - \zeta yz)$$
and refer to $y,z$ as {\em standard generators}
for $(a,b)^R_{\zeta}$. Symbols are Azumaya along the locus where $a,b$ are units. In particular, if $K$ is the fraction field of $R$ then note that $(a,b)^K_{\zeta} \simeq (a,b)^R_{\zeta}\otimes_R K$ is a central simple $K$-algebra. 

Let us now fix a base field $k$ that contains primitive $\ell$-th roots of unity. We denote the group of $\ell$-th roots of unity by $\mu_\ell \subset k$. Let $Q = (q_{ij})$ be an $n \times n$-matrix with entries in $\mu_\ell$ which is skew-symmetric in the sense that $q_{ji} = q_{ij}^{-1}$ and $q_{ii}=1$. Shrinking $\ell$ if necessary, we may assume the $q_{ij}$ generate $\mu_{\ell}$.

Write $R = k[x_1,\ldots,x_n]$ with field of fractions $K$. Consider first the tensor product of symbol algebras determined by $Q$,
\begin{equation*}  \label{eq:tensorQsymbols}
\Lambda_Q:=\bigotimes_{1 \leq i < j \leq n} (x_i,x_j)^R_{q_{ij}}     
\end{equation*}
and let $y_{ij},y_{ji}$ denote the standard generators of $(x_i,x_j)^R_{q_{ij}}$ so 
\begin{equation*}  \label{eq:yrs}
y_{rs}^{\ell} = x_r \ \text{for all } \ r,s. 
\end{equation*}
Amongst the various $\ell$-th roots of $x_i$, we will choose $y_i := y_{i,i+1}$ where the indices are taken modulo $n$. For $j \neq i, i+1$, we note $y_{ij}$ and $y_i$ commute so we have $\ell$-th roots of unity 
\begin{equation*}  \label{eq:tauij}
    \tau_{ij} := y_{ij}y_i^{-1}  \in K \otimes_R \Lambda_Q.
\end{equation*}
Note that as all the $y_i$'s and $\tau_{ij}$'s skew-commute by roots of unity in $\mu_{\ell}$ we see that $\langle \tau_{ij} \rangle / \mu_{\ell}\simeq (\bZ/\ell \bZ)^{n(n-2)}$, that is, the $n(n-2)$ many $\tau_{ij}$'s generate a $\mu_{\ell}$-central extension of $(\bZ/\ell\bZ)^{n(n-2)}$.
This can be recorded in a 2-cocycle $c_Q \in H^2((\bZ/\ell\bZ)^{n(n-2)}, \mu_{\ell})$. 

Now $\Lambda_Q$ is an order, but not a maximal one since the order 
\begin{equation*}  \label{eq:maxToricOrder}
A_Q := k[y_1,\ldots, y_n] \langle \tau_{ij} | j \neq i, i+1\rangle
\end{equation*}
clearly contains it. Note also that $\Lambda_Q$ is Azumaya away from the coordinate hyperplanes in $\Spec R$ so the ramification is in this sense ``toric''. Though the order $A_Q$ {\it a priori} depends on the choice of the $y_i$, replacing $y_i$ with any other $y_{ij}$ would not change the order. Also, the group generated by the $\tau_{ij}$ contains all the $y_{ir}y_{is}^{-1}$ for all $i,r,s$. 

\begin{proposition}  \label{prop:AQisMaximal}
Suppose that all the $q_{ij}$ are primitive $\ell$-th roots of unity. Then the order $A_Q$ above is maximal and isomorphic to the crossed product algebra 
$$ k[y_1,\ldots, y_n] *_{c_Q} (\bZ/\ell \bZ)^{n(n-2)}$$
where $c_Q$ is the 2-cocycle defined above. It has global dimension $n$. 

Lastly, writing $J := \{1,\ldots, n\} - \{i\}$, the ramification above the coordinate hyperplane $x_i = 0$ is given by the cyclic cover of $k[x_j| j \in J]$ defined by
\begin{equation} \label{eq:ramAQ}
    k[y_{ji}| j \in J]^H  \ \text{where } \ H=\{(m_j)_{j \in J} \in (\bZ/\ell \bZ)^J | \sum m_j = 0 \} 
\end{equation}
and $(\bZ/\ell \bZ)^J$ acts on $k[y_{ji}| j \in J]$ by $(m_r)_{r \in J}. y_{ji} = q_{ji}^{m_j} y_{ji}$. 
\end{proposition}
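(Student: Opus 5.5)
The plan is to identify $A_Q$ with the crossed product first, then get global dimension and maximality from that description, and compute ramification last.

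\textbf{Step 1: the crossed product.} Put $S = k[y_1,\ldots,y_n]$. Each $y_{rs}$ skew-commutes with each $y_i$ by an element of $\mu_\ell$, so each $\tau_{ij}$ normalises $S$. Conjugation by $\tau_{ij}$ rescales the variables $y_m$ by roots of unity, giving an action of $G := \langle \tau_{ij}\rangle/\mu_\ell$ on $S$ by diagonal automorphisms. Hence $A_Q = \sum_{g} S\tau_g$, where $\tau_g$ is a chosen monomial lift of $g$. The multiplication is $\tau_g\tau_h = c_Q(g,h)\tau_{gh}$ with $c_Q(g,h) \in \mu_\ell$. To see that the sum is direct and free of rank $\ell^{n(n-2)}$, I would pass to $K\otimes_R\Lambda_Q$, which has dimension $\ell^{2\binom n2}=\ell^{n(n-1)}$ over $K$. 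The monomials $y^{a}\tau_g$ with $0\le a_m<\ell$ correspond bijectively to monomials in the $y_{rs}$ with exponents in $[0,\ell)$, up to scalars. There are $\ell^n\cdot\ell^{n(n-2)} = \ell^{n(n-1)}$ of them, matching a $K$-basis of the tensor product of symbols. So they are $K$-independent, and $A_Q \simeq S *_{c_Q} G$.

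\textbf{Step 2: global dimension and maximality.} Since $|G|$ is invertible in $k$, $\operatorname{gldim} A_Q = \operatorname{gldim} S = n$. Also, $A_Q$ is a free, hence reflexive, $R$-module. For maximality I would use the Auslander--Goldman criterion: a reflexive order is maximal iff it is maximal at every height one prime. Away from the coordinate hyperplanes $A_Q \supseteq \Lambda_Q$ is Azumaya, hence maximal. So it suffices to localise at $\mathfrak p=(x_i)$ and show that $\hat A_{\mathfrak p}$ is hereditary with simple quotient modulo the radical. Locally at $x_i=0$, $y_i$ is normal with $y_i^\ell = x_i$, and I would take $\pi = y_i$. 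Then $\bar A = A_Q/y_iA_Q$ localised is a twisted group algebra of $G$ over $k(y_j : j\ne i)$. I would show that this algebra is simple by computing its centre. This is exactly where primitivity of all the $q_{ij}$ is needed: it guarantees that the commutator pairing on $G$, together with the action on the $y_j$, has no kernel beyond the expected one. The generators $y_{ji}$, which do not commute with $y_i$, give each $q_{ji}$ a primitive eigenvalue. Simplicity of $\bar A$ together with $\pi$ normal with $\pi^{e}$ a unit multiple of $x_i$ makes the local order hereditary and maximal, by the standard characterisation of maximal orders over DVRs.

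\textbf{Step 3: ramification.} The centre of $\bar A_{(x_i)}$ is generated over $k(x_j : j\in J)$ by the monomials in the $y_{ji}$, $j\in J$, that commute with all of $G$ modulo $y_i$. An element $\prod y_{ji}^{a_j}$ commutes with $y_{ir}$ and the other generators exactly when its exponent vector is killed by the relevant characters. Conjugation by $y_i$ (that is, $\pi$) multiplies $y_{ji}$ by $q_{ji}$, which is the action of $(1,\ldots,1)$. Comparing, the central monomials are precisely the $H$-invariants, where $H$ is the sum-zero subgroup. This gives the cyclic cover $k[y_{ji}]^H$ with Galois group $(\bZ/\ell\bZ)^J/H\cong\bZ/\ell$.

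\textbf{Main obstacle.} I expect the main obstacle to be the centre computation in Steps 2 and 3, namely bookkeeping the commutation characters. First I would reduce by the observation that the $y_{ir}y_{is}^{-1}$ lie in the group, so that the generators can be organised by row $i$ of $Q$.
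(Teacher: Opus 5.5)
Your outline is the paper's proof. You get the crossed product from a rank count and global dimension from Maschke. The normal element $y_i$ with $y_i^\ell=x_i$ reduces maximality at $(x_i)$ to simplicity of $A_Q/(y_i)$ localised, and the ramification is read off from the centre of that quotient. Your route to heredity, $\operatorname{rad}\Lambda=y_i\Lambda\cong\Lambda$ being projective, is a valid substitute for the paper's appeal to Maschke.

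The centre computation, however, is argued incorrectly. Since $y_i=y_{i,i+1}$ fails to commute only with $y_{i+1,i}$, the $y_{ji}$ with $j\neq i+1$ \emph{do} commute with $y_i$. So conjugation by $\pi=y_i$ rescales only $y_{i+1,i}$: it is the element $e_{i+1}\in(\mathbb{Z}/\ell\mathbb{Z})^J$, not $(1,\ldots,1)$. Your version cannot be right. We have $(1,\ldots,1)\equiv(n-1)e_{i+1}$ modulo $H$, which lies in $H$ when $\ell\mid n-1$, so $\pi$ would act trivially on the ramification field. Likewise, commuting with the individual $y_{ir}$ is not the right condition. Each $y_{ir}=\tau_{ir}y_i$ lies in $y_iA_Q$ and so vanishes in the quotient. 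Imposing that condition would give invariants under all of $(\mathbb{Z}/\ell\mathbb{Z})^J$, i.e.\ trivial ramification. So your ``comparing'' step does not actually produce $H$.

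The correct argument runs as follows. A monomial $\prod_{s\in J}y_{is}^{c_s}$ conjugates $y_{ji}$ to $q_{ji}^{c_j}y_{ji}$, so it acts as $c\in(\mathbb{Z}/\ell\mathbb{Z})^J$. Row-$i$ monomials in $A_Q$ have $\sum_s c_s\ge 0$, and those with $\sum_s c_s\ge 1$ lie in $y_iA_Q$. The survivors in the quotient are therefore the products of the $\tau_{is}$. These act as $e_s-e_{i+1}$, and they generate exactly $H$.

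To know the centre lies in $k(x_j\mid j\in J)[y_{ji}\mid j\in J]$ at all, you need primitivity at two specific points:
\begin{itemize}
\item for $r,s\in J$, $y_{rs}$ and $y_{sr}$ skew-commute primitively with each other and commute with everything else;
\item $\prod_s\tau_{is}^{b_s}$ commutes with $y_{si}$ ($s\neq i+1$) only if $b_s\equiv 0$.
\end{itemize}
Finally, a field as centre gives simplicity only because the localised quotient is a crossed product of the field $k(y_j\mid j\in J)$ by $G$, hence semisimple by Maschke. You should say this explicitly.
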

\begin{proof}
Using skew-commutativity of the generators, one readily checks that $A_Q$ is isomorphic to the crossed product algebra above so, by Maschke's theorem, has global dimension $n$. 

Let $R_i$ denote the local ring of $R$ at the generic point of the hyperplane $x_i = 0$. Maschke's theorem also shows that $A_Q \otimes_R R_i$ is hereditary, so maximality of $A_Q$ will follow if we can show that the top of $A_Q \otimes_R R_i/(x_i)$ is a simple algebra. Now $y_i$ is a normal element of $A_Q$ which is nilpotent modulo $x_i$ and expressing $A_Q/(y_i)$ as a crossed product algebra shows that the top of $A_Q \otimes_R R_i/(x_i)$ is $A_Q/(y_i) \otimes_R k(x_j| j \in J)$. It thus suffices now to show that the centre of $A_Q/(y_i)$ is the invariant ring $k[y_{ji}| j \in J]^H$ in (\ref{eq:ramAQ}) above.

To simplify calculations, we will work in $\bar{A}_i := \left(A_Q/(y_i)\right) [x_j^{-1} | j \in J]$ whose elements we will write as linear combinations of monomials in the generators $y_{rs}^{\pm 1}$ for $r \neq i$ and  $\tau_{ij}$ for $j \neq i, i+1$. These generators all skew-commute, so $Z(\bar{A}_i)$ will be spanned by the monomials which commute with all the generators. First consider $y_{rs}$ for $r,s \neq i$. Then $y_{rs}$ commutes with all generators except $y_{sr}$ with which it skew-commutes by the primitive $\ell$-th root of unity $q_{ij}$. Thus any central monomial can be written without $y_{sr}$. Similarly, $y_{ji}$ for $j \neq i,i+1$ commutes with every generator except $\tau_{ij}$ so we see $Z(\bar{A}_i) \subseteq k[y_{ji}^{\pm 1}|j \in J]$. The remaining generators $\tau_{ij}$ act by conjugation on $k[y_{ji}^{\pm 1}|j \in J]$. If $H$ denotes the group they generate modulo scalars, then $Z(\bar{A}_i) = k[y_{ji}^{\pm 1}|j \in J]^H$. Intersecting this with $A_Q/(y_i)$ gives the invariant ring (\ref{eq:ramAQ}) as desired. 
\end{proof}

In the 3-dimensional prime index case, terminal toric ramification was classified \cite[Proposition~5.4]{11authors}. We re-write their result in our notation below. 

\begin{proposition} \label{prop:11authorTerminalToric}
Suppose now that $\ell$ is an odd prime and that $n=3$ so the multiplicatively skew-symmetric $Q=(q_{ij})$ is given by the triple $\vec{q}:=(q_{12},q_{23},q_{31})$. 
Then the Brauer class of $A_Q \otimes_R k[[x_1,x_2,x_3]]$ is terminal iff two of the coordinates of $\vec{q}$ are inverses of each other. 
\end{proposition}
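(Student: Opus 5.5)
This statement is a translation of \cite[Proposition~5.4]{11authors} into our notation, so the plan is to match the two parametrisations of toric ramification data and read off the terminality criterion. No new geometry is needed.

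We first compute the ramification data of $A_Q$ on each coordinate plane using Proposition~\ref{prop:AQisMaximal}. Since $\ell$ is an odd prime and the $q_{ij}$ generate $\mu_\ell$, either some $q_{ij}=1$ (the symbol is trivial and that term drops out) or all relevant ones are primitive. We handle the degenerate case separately, by reducing to fewer nontrivial symbols or by noting that a Brauer class with a trivial symbol has smooth-cover ramification.

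In the generic case, take $n=3$ and $i=3$, so $J=\{1,2\}$. By (\ref{eq:ramAQ}) the ramification over $x_3=0$ is the cyclic cover $k[y_{13},y_{23}]^H$ of $k[x_1,x_2]$. Here $H$ is the antidiagonal subgroup of $(\bZ/\ell)^2$, acting by $y_{13}\mapsto q_{13}^{m}y_{13}$ and $y_{23}\mapsto q_{23}^{-m}y_{23}$. This is a cyclic $\bZ/\ell$ cover of the plane, branched along the two axes, determined by the pair of characters $(q_{13},q_{23}^{-1})=(q_{31}^{-1},q_{32})$ up to a common power. Equivalently, it is the Kummer cover by $x_1^{a}x_2^{b}$ with $\zeta^a=q_{31}$ and $\zeta^b=q_{32}$, i.e.\ the residue of $(x_3,x_1)_{q_{31}}\otimes(x_2,x_3)_{q_{23}}$ along $x_3$. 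The other two planes are handled by cyclic symmetry.

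This is exactly the residue description used in \cite{11authors}. There the Brauer class $\sum (x_i,x_j)_{q_{ij}}$ in $\operatorname{Br}k(x_1,x_2,x_3)[\ell]$ is encoded by exponents $(a_{12},a_{23},a_{31})\in(\bZ/\ell)^3$. Their Proposition~5.4 says it is terminal iff the ramification on each plane $x_i=0$ is not \emph{too} degenerate. Concretely, the condition is that, after normalising, two of the exponents are negatives of each other. Multiplicatively, with $q_{ij}=\zeta^{a_{ij}}$, this is the statement that two coordinates of $\vec q$ are mutually inverse.

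Finally, Proposition~\ref{prop:AQisMaximal} tells us that $A_Q$ is maximal with Brauer class $[Q]$. Terminality is a property of the Brauer class and its ramification data, and it is preserved on passing to $k[[x_1,x_2,x_3]]$, since the ramification divisors remain the three coordinate planes. So the statement follows by dictionary.

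The main obstacle is the bookkeeping of conventions: the orientation $q_{ji}=q_{ij}^{-1}$, and the sign in the residue map $\partial_{x_i}(x_i,x_j)=x_j^{\pm1}$. The risk is that a condition like ``$a_{12}+a_{23}=0$'' in \cite{11authors} corresponds here to $q_{12}q_{23}=1$ rather than $q_{12}=q_{23}$. The criterion ``two of them are inverse'' is invariant under simultaneously inverting all coordinates and under cyclic permutation. It is not, however, invariant under inverting a single coordinate, so one must check that the cyclic orientation $(12),(23),(31)$ matches theirs. I would check this against one explicit example, such as $\vec q=(\zeta,\zeta^{-1},\zeta^{c})$, by computing discrepancies of the blowup at the origin in both conventions.
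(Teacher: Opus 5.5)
Your route is the paper's own, since the paper gives no argument beyond restating \cite[Proposition~5.4]{11authors} in its notation. The concrete problem is your plan to dispose of the degenerate case (some $q_{ij}=1$) ``separately'': the equivalence actually fails there, so no such argument can exist. Take $\vec q=(\zeta,\zeta^{-1},1)$. The class is $(x_1,x_2)_\zeta-(x_2,x_3)_\zeta=(x_1x_3,x_2)_\zeta$, ramified with index $\ell$ along all three coordinate planes. The toric valuation $w=(1,0,\ell-1)$, centred on the $x_2$-axis, has $\nu_w(x_1x_3)=\ell$ and $\nu_w(x_2)=0$, so its residue $\overline{x_2^{-\ell}}$ is trivial. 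Hence $r_w=1$ and the b-discrepancy is $\sum_i w_i/\ell-1/r_w=0$. So the class is not terminal (in the sense of \cite{11authors}), although two coordinates of $\vec q$ are inverse. The smooth ramification covers over $x_1=0$ and $x_3=0$ are irrelevant, and over $x_2=0$ the cover is the $A_{\ell-1}$ one. The proposition must therefore be read with every $q_{ij}$ primitive, the standing hypothesis of Proposition~\ref{prop:AQisMaximal}; without it $K\otimes_R\Lambda_Q$ is not even central simple.

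Second, the orientation check, which is the whole content of a dictionary proof, is left open, and your proposed test cannot settle it. A single discrepancy can only refute terminality. So examining the origin blowup for $(\zeta,\zeta^{-1},\zeta^c)$, which the criterion calls terminal, confirms nothing; its b-discrepancy there is $2/\ell>0$ in either convention. Use instead $\vec q=(\zeta,\zeta,\zeta)$, which the stated criterion calls non-terminal but any single-coordinate flip of convention would call terminal. The residue of $(x_1,x_2)_\zeta+(x_2,x_3)_\zeta+(x_3,x_1)_\zeta$ at $w=(1,1,1)$ is $(-1)^3$, an $\ell$-th power. So $r_w=1$ and the origin blowup has b-discrepancy $3/\ell-1\le 0$, confirming the stated orientation. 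Alternatively, argue intrinsically as in the Strange Duality section, reducing to the Terminal Lemma for the dual cyclic quotient. Finally, a small slip: $(a,b)_{\zeta^p}$ represents $p^{-1}$ times $(a,b)_\zeta$, not $p$ times it. Consequently the basic invariant monomial in (\ref{eq:ramAQ}) is $y_{13}^{\beta}y_{23}^{\gamma}$, and the cover is Kummer by $x_1^{\beta}x_2^{\gamma}$ (with $\zeta^\beta=q_{32}$, $\zeta^\gamma=q_{31}$), not by $x_1^{\gamma}x_2^{\beta}$. This is harmless only because the Gorenstein condition $\beta=\gamma$ and the condition ``two exponents sum to zero'' are unchanged by this inversion.
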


We can now re-interpret this result using the ramification formula (\ref{eq:ramAQ}) of Proposition~\ref{prop:AQisMaximal} as follows. 
\begin{corollary} \label{cor:ramCoverToricTerminal}
Suppose $Q$ is a skew-symmetric $3 \times 3$-matrix with entries in $\mu_{\ell}$ where $\ell$ is an odd prime. Then $A_Q \otimes_R k[[x_1,x_2,x_3]]$ is terminal iff one of the ramification covers is Gorenstein, or equivalently, Kleinian of type $A_{\ell-1}$. 
\end{corollary}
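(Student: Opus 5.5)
Corollary~\ref{cor:ramCoverToricTerminal} follows from Proposition~\ref{prop:11authorTerminalToric} once we compute, with formula (\ref{eq:ramAQ}) of Proposition~\ref{prop:AQisMaximal}, when a ramification cover is Gorenstein. Because $\ell$ is prime and the $q_{ij}$ generate $\mu_\ell$, at least one $q_{ij}\neq 1$. In the degenerate case where some $q_{ij}=1$, the algebra has fewer symbol factors, so we first record what happens there. Otherwise all three $q_{ij}$ are primitive and Proposition~\ref{prop:AQisMaximal} applies directly.

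Take the hyperplane $x_i=0$ and let $J=\{j,k\}$ be the other two indices. By (\ref{eq:ramAQ}), the ramification cover is $k[y_{ji},y_{ki}]^H$. The subgroup $H=\{(m,-m)\}\cong\bZ/\ell\bZ$ acts by
\[
(y_{ji},y_{ki})\mapsto (q_{ji}^{m}y_{ji},\, q_{ki}^{-m}y_{ki}).
\]
This is a cyclic quotient singularity $\frac{1}{\ell}(a,b)$, where $q_{ji}=\zeta^a$ and $q_{ki}^{-1}=\zeta^b$ for a primitive root $\zeta$. The action is faithful because $\ell$ is prime and both $q$'s are primitive. It contains no pseudo-reflections because both weights are nonzero modulo $\ell$.

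By Watanabe's theorem, such a quotient is Gorenstein iff the action lies in $SL_2$, i.e.\ iff $q_{ji}q_{ki}^{-1}=1$. Equivalently $q_{ji}=q_{ki}$, which in terms of the triple $\vec q$ says two coordinates are inverse to each other. For instance, with $i=1$ we get $q_{21}=q_{31}$, i.e.\ $q_{12}^{-1}=q_{31}$. The cases $i=2,3$ give the other two pairs of $\vec q$ in the same way. A Gorenstein cyclic quotient $\frac1\ell(1,-1)$ is exactly the Kleinian singularity of type $A_{\ell-1}$, which gives the stated equivalence. Comparing with Proposition~\ref{prop:11authorTerminalToric} then finishes the primitive case.

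The step needing most care is the index bookkeeping: for each $i$, we must check that the condition $q_{ji}=q_{ki}$ matches exactly one of the three inverse-pair conditions on $\vec q=(q_{12},q_{23},q_{31})$, and that all three pairs arise as $i$ varies. The other point to settle is the degenerate case where some $q_{ij}=1$. There $Q$ reduces to a single symbol, say $(x_1,x_2)_\zeta$. Its ramification covers are smooth, given by adjoining an $\ell$-th root of a coordinate, and the ramification locus is two crossing planes. Here the inverse condition holds trivially, since $1=1^{-1}$, and the covers $k[x_2^{1/\ell},x_3]$ are smooth. A smooth cover is Gorenstein, so the equivalence still holds, provided we interpret "Kleinian of type $A_{\ell-1}$" as the statement requires. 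I would handle this case in a short remark, or restrict to all $q_{ij}$ primitive as in Proposition~\ref{prop:AQisMaximal}.
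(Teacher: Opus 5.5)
Your argument for the case where all three $q_{ij}$ are primitive is the paper's argument. You read off the cover over each coordinate plane from (\ref{eq:ramAQ}) and recognise a cyclic quotient by $H\cong\mu_\ell$. It is Gorenstein iff $H\subset SL_2$, i.e.\ iff two coordinates of $\vec q$ are inverse, and you compare with Proposition~\ref{prop:11authorTerminalToric}. The paper writes out only $x_1=0$ and leaves the other planes to symmetry. Your bookkeeping for $i=2,3$ is correct, and so is your check that $H$ has no pseudo-reflections, which is what licenses the appeal to Watanabe.

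The gap is the degenerate-case paragraph, which asserts something false. Reduction to a single symbol $(x_1,x_2)_\zeta$ ramified on two planes happens only when \emph{two} entries of $\vec q$ are trivial. Suppose exactly one is trivial, say $q_{31}=1$ with $q_{12},q_{23}$ primitive. Then the class is that of $(x_1,x_2)_{q_{12}}\otimes(x_2,x_3)_{q_{23}}$, which ramifies on all three planes.

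\begin{itemize}
\item Along $x_1=0$ and $x_3=0$ the residue is a nontrivial power of $x_2$, so those covers are smooth, hence Gorenstein.
\item Along $x_2=0$ the residue is $x_1^{\alpha}x_3^{\beta}$ with $\alpha\beta\not\equiv 0$. This gives a cyclic quotient of type $A_{\ell-1}$ exactly when $q_{12}q_{23}=1$.
\end{itemize}

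Yet such an order is never terminal. Take toric valuations $w=(w_1,0,w_3)$ over the $x_2$-axis, where $x_2$ is a unit. The residue at $w$ is $x_2^{\gamma w_1+\delta w_3}$ with $\gamma,\delta\not\equiv 0$. So each $w_1\in\{1,\dots,\ell-1\}$ determines a unique $w_3\in\{1,\dots,\ell-1\}$ with $r_w=1$, and $w_1\mapsto w_3$ is a bijection. Such a divisor has discrepancy $(w_1+w_3)/\ell-1$. Since the average of $w_1+w_3$ is $\ell$, some choice gives discrepancy $\le 0$. For example, with $\ell=3$ and $\vec q=(\zeta,\zeta^{-1},1)$, the class is a power of $(x_1x_3,x_2)$ and $w=(1,0,2)$ has discrepancy $0$.

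So in the one-trivial case:
\begin{itemize}
\item the ``Gorenstein'' form of the equivalence always fails;
\item the ``$A_{\ell-1}$'' form fails when $q_{12}q_{23}=1$;
\item Proposition~\ref{prop:11authorTerminalToric}, read literally, also fails there, since $(q_{12},q_{12}^{-1},1)$ contains an inverse pair.
\end{itemize}
In the two-trivial case, as you noticed, the order is terminal but its covers are smooth rather than $A_{\ell-1}$. So no reading of the statement survives the degenerate cases.

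The fix is your fallback, not the remark. State and prove the corollary under the hypothesis that all $q_{ij}$ are primitive. The paper uses this hypothesis tacitly, since maximality of $A_Q$ and formula (\ref{eq:ramAQ}) are only established under it, in Proposition~\ref{prop:AQisMaximal}. With that hypothesis your proof is complete.
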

\begin{proof}
We use Proposition~\ref{prop:AQisMaximal} to compute the ramification above the coordinate plane $x_1 = 0$. Now $H = \{(m_2,m_3)\}$ is a cyclic group of order $\ell$ and we pick the generator $\sigma = (-1,1)$. Its action on $k[y_{21},y_{31}]$ is given by 
\begin{equation*}
    \sigma \colon \ y_{21} \mapsto q_{21}^{-1} y_{21} = q_{12} y_{21}, \ \ y_{21} \mapsto q_{31} y_{31}. 
\end{equation*}
The ramification cover $k[y_{21},y_{31}]^{H}$ is Gorenstein precisely when this linear $H$-action factors through $SL_2$, which in this case, amounts to the fact that $q_{12},q_{31}$ are inverses of each other. 
\end{proof}

\section{Strange Duality}

In this section, we make a diversion to explore how the toric orders over the polynomial ring are strangely related to skew polynomial rings. 

Our basic setup will be similar to the previous section though we now insist that $\ell \in \bN$ is a prime. We fix a base field $k$ that contains primitive $\ell$-th roots of unity. Consider the lattice $M =  \bZ^n$.  Our skew-symmetric matrix $Q = (q_{ij})$ with $q_{ij} \in \mu_\ell$ will now also be viewed as  a skew symmetric pairing $Q : M \times M \to \mu_\ell$. 
We fix $\zeta$, a primitive $\ell$-th root of unity and write $P= \log_{\zeta} Q = (p_{ij})$ to be the skew symmetric matrix with entries in $\bZ/\ell$ such that 
$\zeta^{p_{ij}} = q_{ij}.$

Let $\sigma$ be the cone $\mathbb{N}^n$ and write $k[x_1,\ldots,x_n] = k[\sigma]$ with field of fractions $K$.
Consider first the Morita equivalent tensor products of symbol algebras determined by $Q$,
$$\bigotimes_{1 \leq i < j \leq n} (x_i,x_j)^K_{q_{ij}} \stackrel{\text{Morita}}{\sim} 
\bigotimes_{1 \leq i < j \leq n} ((x_i,x_j)^K_{\zeta})^{\otimes p_{ij}}$$
whose Brauer class we denote by  $\alpha_Q \in \mathrm{Br} K$ and let $A_Q$ denote a maximal order in this central simple algebra. If all the $q_{ij}$ are non-trivial, then this is given by (\ref{eq:maxToricOrder}). 

To $Q$ we can also associate the skew polynomial 
$$k_Q[x_1,\ldots,x_n] := k\langle x_1,\ldots,x_n \rangle/(x_jx_i - q_{ij}x_ix_j).$$
Let $$\rad Q = \{ m \in M \,|\, Q(m,n) =1 \mbox{ for all } n \in M\}$$
and $\rad_+ Q = \rad Q \cap \sigma$.
Recall that the centre of $k_Q[x_1,\ldots,x_n]$ is given by  
$$ Z(k_Q[x_1,\ldots,x_n] ) = k[\rad_+ Q].$$
Lastly, we define the dual lattices $N = M^\vee = \Hom_\bZ(M,\bZ)$ and $(\rad Q)^\vee$ and dual cones
$$\sigma^\vee = \{ n \in N \,|\, (n,m) \geq 0 \mbox{ for all } m \in \sigma \}$$ and similarly $(\rad_+ Q)^\vee$.

We note that the singularities of the dual of the centre of $k_Q[x_1,\ldots,x_n]$ and $A_Q$ share the following properties. 
\begin{theorem} Let $\ell \in \bN$ be prime.
    Let $Q : M \times M \to \mu_\ell$ be a skew symmetric pairing.  Then 
    the order $A_Q$ is terminal (respectively canonical, log terminal) if and only if the singularity $X_Q = \Spec k[(\rad_+ Q)^\vee]$ is terminal (respectively canonical, log terminal).
\end{theorem}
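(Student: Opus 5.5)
The plan is to reduce both sides to the same combinatorics on the lattice $N$, using toric valuations. The Brauer class $\alpha_Q$ is unramified off the coordinate hyperplanes, so the ramification of any maximal order over a toric modification of $\Spec k[\sigma]$ can only lie on torus-invariant divisors. I therefore expect to be allowed to test terminality, canonicity and log terminality on toric blowups only, i.e. on the divisors $E_v$ for primitive $v \in N$ in the interior of $\sigma^\vee$. This reduction is standard for log pairs $(X,\Delta)$ with $\Delta$ toric. It uses that discrepancies of orders are discrepancies of the pair $(X,\Delta_A)$ corrected by the ramification of $\alpha_Q$ along the new divisor. $K_{A_Q}$ is automatically $\bQ$-Cartier since $\Spec k[\sigma]$ is smooth. $X_Q$ is a simplicial toric variety, hence $\bQ$-factorial, so the $\bQ$-Cartier conditions on both sides are harmless.

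\emph{Step 1: compute ramification indices along toric valuations.} For $v \in N$ primitive I will compute the residue of $\alpha_Q$ along $E_v$ via the tame symbol. On the torus, the residue of $(x^a,x^b)_\zeta$ along $v$ is the class of the monomial $x^{\langle v,a\rangle b - \langle v,b\rangle a}$ modulo $\ell$-th powers. Summing over the symbols, the residue of $\alpha_Q$ should be the character $m \mapsto Q(v,m)$, i.e. $m \mapsto \zeta^{v^T P m}$ on $M$. Since $\ell$ is prime, the ramification index $e_v$ is $1$ if $Pv \equiv 0 \pmod \ell$, that is if $v$ pairs to $\ell$-th roots of $1$ trivially against $M$ modulo $\rad Q$, and $e_v = \ell$ otherwise. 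In particular this gives $e_i := e_{e_i^*}$ for the coordinate divisors. The discrepancy of $E_v$ for the order is then
\begin{equation*}
a_A(v) = \sum_i \frac{v_i}{e_i} - \frac{1}{e_v},
\end{equation*}
namely the log discrepancy of $E_v$ for $(X,\Delta_A)$ minus $1/e_v$. Terminal, canonical and log terminal mean $a_A(v) > 0$, $\geq 0$ and $> -1 + \ldots$ respectively for all non-coordinate $v$, with the usual conventions.

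\emph{Step 2: identify the lattice of $X_Q$.} The variety $X_Q$ is the toric variety of the cone $\sigma^\vee \otimes \bR$ with respect to the lattice $N' := (\rad Q)^\vee \supseteq N$. Since $\ell M \subseteq \rad Q$, we have $N' \subseteq \frac{1}{\ell}N$. Concretely, $w \in \frac1\ell N$ lies in $N'$ iff $\langle \ell w, m\rangle \equiv 0$ for all $m \in \rad Q$. I will show that this happens iff $v = \ell w$ satisfies $Pv \equiv 0$ modulo the appropriate condition. The key claim is that the map $v \mapsto v/e_v$ is a bijection from primitive $v \in N \cap \sigma^\vee$ onto primitive elements of $N'$ in $\sigma^\vee$. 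In particular the primitive ray generators of $N'$ are $u_i = e_i^*/e_i$. This is where Step 1 and the description of $\rad Q$ as the kernel of $P$ on $M/\ell M$ must be matched, using $P$ skew so that $\ker P$ and $\operatorname{im} P$ are annihilators of each other.

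\emph{Step 3: compare discrepancies.} For the simplicial toric variety $X_Q$, the log discrepancy of the divisor attached to a primitive $w \in N'$ is $\varphi(w)$, where $\varphi$ is the linear function with $\varphi(u_i) = 1$, i.e. $\varphi(w) = \sum_i e_i w_i$. Hence its discrepancy is $\varphi(w) - 1$. Taking $w = v/e_v$ gives
\begin{equation*}
\varphi(w) - 1 = \frac{1}{e_v}\Bigl(\sum_i e_i v_i\Bigr) - 1.
\end{equation*}
Comparing with $a_A(v)$, I expect to need to rewrite the coefficients carefully, possibly choosing $w$ so that the two expressions differ by the positive factor $e_v$ (or an analogous normalisation). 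If so, signs and the vanishing/strict inequalities agree term by term, and the equivalence follows for terminal, canonical and log terminal simultaneously.

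The main obstacle is exactly this normalisation in Steps 2 and 3: making the ray generators of $N'$ and the indices $e_i$, $e_v$ line up so that the two discrepancy functions are positive multiples of each other. If the naive matching $v \mapsto v/e_v$ fails, I would instead use the index-one cover viewpoint. The ramification cover of $\alpha_Q$ is the toric cover with lattice $N'$. Discrepancies of the order should agree, up to the factor $e_v$, with discrepancies of $X_Q$, by the usual Riemann--Hurwitz comparison for cyclic covers.
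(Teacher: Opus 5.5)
There is a genuine gap, and it sits exactly at the normalisation you flagged: in Steps 2--3 you dualise the lattice the wrong way.

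Your Step 1 is fine and agrees with the paper. The residue is $m\mapsto Q(v,m)$, so $e_v=1$ iff $v\in\rad Q$ once $N$ is identified with $M$ via the standard basis. The paper's $b$ is just $e_v\,a_A(v)$.

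The problem is that you then use $N'=(\rad Q)^\vee$ as the lattice whose primitive vectors index the toric divisors of $X_Q$. But $X_Q=\Spec k[(\rad_+Q)^\vee]$ is the semigroup ring of the dual cone in $(\rad Q)^\vee$, so $(\rad Q)^\vee$ is the \emph{character} lattice of $X_Q$. Its lattice of one-parameter subgroups is $\rad Q$, which the paper rescales to $\frac1\ell\rad Q\supseteq M$. The variety you actually compute with (cocharacter lattice $(\rad Q)^\vee$, orthant cone) is $\Spec k[\rad_+Q]$. That is the centre of $k_Q[x_1,\ldots,x_n]$, the other side of the duality, and for it the statement is false.

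Here is a concrete example. Take $n=3$, $\ell\ge 3$, and $(q_{12},q_{23},q_{31})=(\zeta,\zeta^{-1},\zeta^s)$ with $\ell\nmid s$. Then $\rad Q=\ell M+\bZ\kappa$ with $\kappa\equiv(-1,s,1)$ mod $\ell$, and every coordinate divisor is ramified. Your key claim therefore predicts ray generators $e_i^*/\ell$. In fact $e_i^*/\ell\notin(\rad Q)^\vee$, because no $\kappa_i$ vanishes mod $\ell$. Meanwhile $\frac1\ell(e_1^*+e_3^*)$ does lie in $(\rad Q)^\vee$ and has $\varphi=2/\ell<1$. So your variety is not even canonical. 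Yet $A_Q$ is terminal by Proposition~\ref{prop:11authorTerminalToric}, and the true $X_Q$ is the terminal cyclic quotient $\frac1\ell(-1,s,1)$.

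The inverted ratios you met in Step 3 ($e_i/e_v$ versus $e_v/e_i$) are a symptom of this wrong lattice, not something a rescaling can fix. The index-one-cover fallback is too vague to count as an argument.

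The repair is the paper's proof:
\begin{itemize}
\item Work in $L=\frac1\ell\rad Q$. Its ray generators are $f_i=\frac{r_i}{\ell}e_i$, that is, $e_i/\ell$ if $e_i\in\rad Q$ and $e_i$ otherwise. Hence the log discrepancy function is $\varphi(\sum_i w_ie_i)=\sum_i \ell w_i/r_i$.
\item The right correspondence is not $v\mapsto v/e_v$ but $v\mapsto v/\ell$. It is defined precisely on the unramified $v$, since $v/\ell\in L$ iff $v\in\rad Q$ iff $r_v=1$, and it preserves primitivity.
\item On these divisors it gives $\varphi(v/\ell)-1=\sum_i v_i/r_i-1=a_A(v)$ exactly, with no normalising factor.
\item A ramified exceptional $v$ has $a_A(v)\ge 2/\ell-1/\ell>0$.
\item Any other primitive point of $L$ in the orthant lies in $M$. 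If it is not a ray generator, its discrepancy is at least $1$.
\end{itemize}
So none of the remaining divisors affects the terminal or canonical thresholds, and the equivalence follows.
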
    
\begin{proof}
To set up the proof we recall some facts and discuss how the discrepancy is computed.
Firstly, we note that 
$$\ell M \subseteq \rad Q \subset M$$
and we scale by $1/\ell$ to obtain
$$M \subseteq \frac{1}{\ell} \rad Q \subseteq \frac{1}{\ell}M$$
Contrary to usual conventions, this will be our lattice of one parameter subgroups.
Furthermore, for cones we have
$$\sigma \subset \frac{1}{\ell}\rad_+ Q \subset \frac{1}{\ell}\sigma$$
Write $e_i$ for the generators of $\sigma$ and note that the cone $\frac{1}{\ell}\rad_+Q$ is simplicial with
primitive generators for the rays $\bR_+ e_i$ given by $f_i = \frac{1}{\ell} e_i$ or $e_i$ depending on whether $e_i \in \rad Q$ or not. 
To compute the discrepancy of $\Spec k[(\rad_+ Q)^\vee]$ we consider all toric divisors coming 
from primitive elements $w \in \frac{1}{\ell}\rad_+ Q \subseteq \frac{1}{\ell}M$ except the divisors in the current model, namely the $f_i$.
Let $w=\sum w_i f_i.$  Then the discrepancy of $X_Q$ at the toric divisor determined by $w$ is given by 
$$a(X_Q,w)  = \sum w_i -1.$$
We first note that at least two $w_i \neq 0$.  Also, if some $w_i \geq 1$ then $a(X_Q,w) > 0$ and we can dispose of the divisor corresponding to $w$. In particular, if some $f_i =\frac{1}{\ell}e_i$ then $w_i$ is integral so we may assume that $w_i = 0$.  In other words, we are reduced to considering toric divisors corresponding to primitive rays in 
$$R = \left\{ w = \sum w_i e_i \in \rad Q \,|\ 0\leq w_i < 1 \mbox{ and } w_i = 0 \text{ if } f_i \neq e_i \right\} - \{ e_i\}.$$
The discrepancy formula thus reduces to 
$$ a(X_Q) = \min_{w \in R} 
 \sum w_i -1.$$

We now perform a similar calculation for the pair $(\bA^n,\alpha_Q)$.  To determine the discrepancy in this case we need the ramification index of $\alpha_Q$ at each toric divisor.
This can be computed using the tame ramification formula.  Let $K$ be a field with chosen primitive $\ell$-th root of unity $\zeta \in K$.  For $a,b \in K^{\times}$, let $A=(a,b)^K_\zeta$ be a symbol algebra as usual.  Fix a
divisorial valuation $\nu: K^{\times} \to \bZ$ and let $K_\nu$ be the residue field. The ramification of $A$ at $\nu$ is given by 
$$\rho_\nu(A) = (-1)^{\nu(a)\nu(b)} \overline{a^{\nu(b)} b^{-\nu(a)}} \in K_{\nu}^{\times} / K_{\nu}^{\times \ell}$$
where $\overline{a}$ is the reduction to $K_{\nu}^{\times}/ K_{\nu}^{\times \ell}$.
The ramification index $r_{\nu}(A)$ of the symbol $A$ at $\nu$ is the order of $\rho_{\nu}(A)$ in $K_{\nu}^{\times}/ K_{\nu}^{\times \ell}$. Given our assumption that $\ell$ is prime, this ramification index is either 1 or $\ell$. 

We consider toric divisors over $\bA^n$ which correspond to primitive vectors $w = \sum w_i e_i \in \sigma$.
Note that we can interpret $w$ as a divisorial valuation on $K$ and so let $\rho_w, r_w$ denote the corresponding ramification and ramification index of $A_Q$. Furthermore, the divisor corresponding to $w$ is toric containing a dense torus which can be naturally identified with $\Spec k[w^{\perp}]$. Thus the corresponding residue field $K_w$ is the fraction field of $k[w^{\perp}]$. The tame ramification formula now gives
$$\rho_w(\alpha_Q) = \prod_{1 \leq i < j \leq n} ((-1)^{w_iw_j} \overline{x_i^{w_j} x_j^{-w_i}} )^ {p_{ij}}$$
and so $r_w=1$ if and only if $w \in \rad Q$. In particular, $\rho_i := \rho_{e_i} = 1$ if and only if $f_i \neq e_i$. 

The b-discrepancy of the pair $(\bA^n,\alpha_Q)$ at the divisor associated to $w$ is given by the formula
$$b(\bA^n,\alpha_Q, w) = \sum w_i \frac{r_w}{r_i} -1.$$
Note as above that if $r_i =1$ and $w_i \neq 0$ then $b(\bA^n,\alpha_Q, w)>0$, so we can assume that $w_i=0$ whenever $f_i \neq e_i$. Also, if $r_w=\ell$ we also have $b(\bA^n,\alpha_Q, w)>0$.  
So to compute the discrepancy we obtain
$$b(\bA^n,\alpha_Q) = \min_{ \frac{w}{\ell} \in R} \sum  \frac{w_i}{\ell} -1.$$
\end{proof}

\section{Deformations of symbols}  \label{sec:deformSymbols}

We turn now to constructing  terminal orders ramified on surfaces with Kleinian singularities. Our approach is to consider deformations of symbol algebras which can be thought of as higher rank generalisations of Clifford algebras. 

Let $R$ be a commutative domain containing a primitive $\ell$-th root of unity $\zeta$ and $a,b\in R$ be non-zero elements and $r \in R$. We define the {\em deformed symbol $R$-algebra} 
\begin{equation}  \label{eq:defSymbol}
    (a,b)^R_{\zeta,r} = \frac{R \langle x,y \rangle}{(x^\ell -a, y^\ell-b, yx -\zeta xy - r)}
\end{equation}
and refer to $x,y$ as {\em standard generators}. Note that when $\ell=2$, this is just a Clifford $R$-algebra and the usual hypothesis that the characteristic is not 2 has been translated to the condition that $R$ has a primitive square root of unity. 

\begin{proposition}  \label{prop:defSymbolBasis}
The $R$-algebra $A = (a,b)^R_{\zeta,r}$ above is free as an $R$-module with basis $\{x^i y^j\,|\, 0 \leq i,j<n\}$. 
\end{proposition}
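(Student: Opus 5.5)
Note that the statement has a typo: the basis should be indexed by $0\le i,j<\ell$. The proof has two halves: spanning, which comes from rewriting, and linear independence, which comes from an explicit module on which $A$ acts.

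\emph{Spanning.} The relations let us rewrite any word in $x,y$:
\[
yx \to \zeta xy + r,\qquad x^\ell \to a,\qquad y^\ell \to b .
\]
Every word is therefore an $R$-linear combination of ordered monomials $x^iy^j$ with $0\le i,j<\ell$. To see that this terminates, induct on word length and then on the number of inversions (occurrences of $y$ before $x$). Each rewrite either reduces the inversion count while keeping the length, or strictly shortens the word (the $r$ term and the power reductions).

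\emph{Independence.} I would build a representation. Let $F$ be the free $R$-module with basis $e_{ij}$ for $0\le i,j<\ell$, and define operators $X,Y$ on $F$ that imitate left multiplication. Set
\[
Y e_{ij} =
\begin{cases}
e_{i,j+1} & \text{if } j<\ell-1,\\
b\,e_{i0} & \text{if } j=\ell-1,
\end{cases}
\]
so that $Y^\ell = b$ is immediate. Since $yx^i = \zeta^i x^i y + (\text{lower terms in } r)$, the operator $Y$ as defined only matches left multiplication by $y$ after adjusting by those lower terms. It is therefore cleaner to make $Y$ the simple one and define $X$ through the commutation rule. Reversing roles, take
\[
X e_{ij}=
\begin{cases}
e_{i+1,j} & \text{if } i<\ell-1,\\
a\,e_{0j} & \text{if } i=\ell-1,
\end{cases}
\]
and define $Y$ as left multiplication by $y$ on $x^iy^j$. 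For that, iterating the relation gives
\[
yx^i=\zeta^i x^i y + r\,[i]_\zeta\, x^{i-1},
\qquad [i]_\zeta = 1+\zeta+\dots+\zeta^{i-1},
\]
where we use that $r$ is central (it lies in $R$). So we set
\[
Y e_{ij}= \zeta^i\, e_{i,j+1} + r\,[i]_\zeta\, e_{i-1,j},
\]
with $e_{i,\ell}$ interpreted as $b\,e_{i,0}$.

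\emph{Checking the relations.} Three identities must hold.
\begin{enumerate}
\item $X^\ell=a$: this is immediate from the definition of $X$.
\item $YX-\zeta XY = r$: check on each basis vector, splitting into the cases $i<\ell-1$ and $i=\ell-1$. The wraparound case $i=\ell-1$ uses $[\ell]_\zeta=0$, which holds because $\zeta$ is a primitive $\ell$-th root of unity. This vanishing is also what makes the boundary terms consistent.
\item $Y^\ell=b$: this is the main obstacle. Write $Y=D+N$, where $D e_{ij}=\zeta^i e_{i,j+1}$ (with wraparound by $b$) and $N$ lowers the index $i$. Then $D^\ell=b$, since each $e_{ij}$ picks up the scalar $\zeta^{i\ell}b=b$. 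The relation $ND=\zeta DN$ also holds, because $D$ raises the index $j$ and scales by $\zeta^i$ while $N$ lowers $i$. By the $q$-binomial theorem with $q=\zeta$ primitive of order $\ell$, this gives $(D+N)^\ell = D^\ell+N^\ell$. Finally $N^\ell=0$: it lowers $i$ by $\ell$, and in any case it kills $e_{0j}$ since $[0]_\zeta=0$. Hence $Y^\ell=b$.
\end{enumerate}

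\emph{Conclusion.} These identities make $F$ an $A$-module. Sending $\alpha\mapsto \alpha\cdot e_{00}$ maps $x^iy^j$ to a vector of the form $e_{ij}$ plus terms in the span of $e_{i'j}$ with $i'<i$. Concretely, $x^iy^j e_{00}= X^iY^je_{00}$. Since $Y^je_{00}=e_{0j}$ (because $N e_{0j}=0$), we get $X^iY^je_{00}=e_{ij}$ exactly. So the monomials $x^iy^j$ map to a basis of $F$, hence are $R$-linearly independent, and together with spanning they form a basis of $A$.
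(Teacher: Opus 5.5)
Your proof is correct, and you are right that the bound should be $0\le i,j<\ell$ rather than $n$.

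Your route differs from the paper's. The paper gets spanning from the relations and then invokes Bergman's diamond lemma. It checks that the overlap ambiguities $y^\ell x$ and $yx^\ell$ resolve, and both reduce to $\zeta^\ell=1$ and $1+\zeta+\dots+\zeta^{\ell-1}=0$. You avoid the diamond lemma by writing down the left regular representation on $F=\bigoplus R\,e_{ij}$ explicitly and checking the defining relations by hand. The two sets of computations match exactly. Your wraparound case $i=\ell-1$ of $YX-\zeta XY=r$ is the $yx^\ell$ ambiguity, and your identity $Y^\ell=b$ is the $y^\ell x$ ambiguity.

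The paper's route is shorter but relies on an external theorem. It also tacitly needs a compatible word ordering (e.g.\ degree-lexicographic with $y>x$) and the trivial self-overlaps of $x^\ell$ and $y^\ell$, which it does not spell out. Yours is self-contained and even identifies $A\cong F$ as left $A$-modules, at the cost of more verification.

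The heaviest step, $Y^\ell=b$, can be shortened and the $q$-binomial theorem avoided. Once $YX-\zeta XY=r$ holds, induction gives $Y^mX=\zeta^m XY^m+[m]_\zeta\, rY^{m-1}$. At $m=\ell$ this says $Y^\ell$ commutes with $X$. On the row $e_{0j}$, $Y$ is a plain cyclic shift, so $Y^\ell e_{0j}=b\,e_{0j}$. Hence $Y^\ell e_{ij}=X^iY^\ell e_{0j}=b\,e_{ij}$, using the same single fact $[\ell]_\zeta=0$ that the paper uses.
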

\begin{proof}
Our relations mean that we can write elements of $A$ as left $R$-linear combinations of $x^i y^j$ for  $0 \leq i,j<\ell$. We use Bergman's diamond lemma to show they are a free basis. We need to check overlaps for $y^\ell x$ and $y x^\ell$. For example,
\begin{multline*}
bx = y^\ell x = y^{\ell-1} (\zeta x y + r) 
= \zeta y^{\ell-2}(\zeta x y +r)y + y^{\ell-1}r \\ 
= \zeta^2 y^{\ell-2} x y^2 + (\zeta + 1) y^{\ell-1} r 
= \ldots  \\ = xy^\ell + (\zeta^{\ell-1} + \ldots + \zeta + 1)y^{\ell-1}r 
= xy^\ell = xb
\end{multline*}
so no relations between the $x^iy^j$ arise from this overlap. A similar calculation disposes of the overlap $y x^e$. 
\end{proof}

Fix $A = (a,b)^R_{\zeta,r}$ and let $p = xy + \frac{1}{\zeta - 1}r \in A$. Then one calculates the following skew-commutation relations:
\begin{equation} \label{eq:pSkewCommutes}
px = \zeta x^2 y + \frac{\zeta}{\zeta-1} rx = \zeta xp, \quad yp = \zeta x y^2 + \frac{\zeta}{\zeta-1} ry = \zeta p y.
\end{equation}

\begin{proposition} \label{prop:defSymbolGenericallySymbol}
 Let $A = (a,b)^R_{\zeta,r}$ where $R$ is a commutative domain, $\zeta \in R$ is a primitive $\ell$-th root of unity  and $ab \notin (r)\triangleleft R$. If $K$ is the field of fractions of $R$ then $p^\ell \in K$ and 
 $$K\otimes_R A \simeq (a,p^\ell)^K_{\zeta} \simeq (p^\ell,b)_{\zeta}^K$$
 and is a central simple $K$-algebra.
\end{proposition}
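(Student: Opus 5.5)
Since $px=\zeta xp$ and $yp=\zeta py$ by (\ref{eq:pSkewCommutes}), the approach is to show that $p^\ell$ is central, identify it as an element of $R$ (hence of $K$), and then check that it is nonzero. Once that is done, the subalgebras generated by $x,p$ (respectively $p,y$) are images of symbol algebras, and a dimension count finishes the proof.

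\textbf{Step 1: $p^\ell$ is central.} Iterating the relations gives $p^\ell x = \zeta^\ell x p^\ell = xp^\ell$, and likewise $p^\ell$ commutes with $y$. Since $A$ is generated over $R$ by $x,y$, $p^\ell$ lies in $Z(A)$.

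\textbf{Step 2: $p^\ell\in R$.} Expand $p^\ell$ in the basis $\{x^iy^j\}$ of Proposition~\ref{prop:defSymbolBasis}. Conjugation by $x$ (invertible in $K\otimes_R A$ once $a\neq0$) must fix $p^\ell$, and so must conjugation by $y$.
\begin{itemize}
\item A cleaner route is to note that $p$ has $\bZ/\ell\times\bZ/\ell$-bidegree $(1,1)$ for the grading in which $x$ has degree $(1,0)$ and $y$ has degree $(0,1)$. The relations are homogeneous for this grading modulo $\ell$, because $r$ has degree $0$ and $xy$ has degree $(1,1)$. The problem is that $p=xy+r/(\zeta-1)$ is not homogeneous.
\item So instead, work in $K\otimes A$ with $x$ invertible. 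There, conjugation by $x$ satisfies $x^{-1}yx=\zeta y + x^{-1}r$, which is messy.
\item The most direct route: $Z(K\otimes_R A)$ commutes with $x$ and $y$, and the centralizer of $x$ is spanned by $x^i$ times the centralizer data. Concretely, $p^\ell$ commutes with $x$, and $p$ normalises $k$-span$\{x^i\}$, so $p^\ell$ lies in the centralizer of $x$. Using the basis and the relation $yx=\zeta xy+r$, I expect to show that this centralizer is $R[x]$, and that an element of $R[x]$ commuting with $y$ lies in $R$. Here $yx^i = \zeta^i x^i y + (\text{lower terms in } y)$, so the coefficient of $y$ forces $i=0$.
\end{itemize}
I expect this centralizer computation to be the main technical step. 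Alternatively, compute $p^\ell$ explicitly as a $q$-binomial-type expansion of $(xy+c)^\ell$, using $(xy)p$-type skew relations.

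\textbf{Step 3: $p^\ell\neq0$.} This is where the hypothesis $ab\notin(r)$ enters. Reduce modulo $r$: in $A/rA=(\bar a,\bar b)^{R/r}_\zeta$, the image of $p$ is $xy$, and $(xy)^\ell=\zeta^{\binom{\ell}{2}}x^\ell y^\ell=\pm ab$. Hence $p^\ell\equiv \pm ab \pmod r$, which is nonzero modulo $r$ by hypothesis, so $p^\ell\neq0$.

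\textbf{Step 4: identifying the algebra.} There is a $K$-algebra map $(a,p^\ell)^K_\zeta\to K\otimes A$ sending the standard generators to $x$ and $p$. Its source is central simple of dimension $\ell^2$ because $a,p^\ell\neq0$, so the map is injective. The target has dimension $\ell^2$ by Proposition~\ref{prop:defSymbolBasis}, so the map is an isomorphism. The same argument with the generators $p,y$ (using $yp=\zeta py$) gives $(p^\ell,b)^K_\zeta$. Central simplicity of $K\otimes_R A$ follows from either isomorphism.
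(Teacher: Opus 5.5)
Your argument is correct, but it is organised differently from the paper's.

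The paper first shows that $x$ and $p$ generate $K\otimes_R A$, using $K[x]+K[x]y=K[x]+K[x]p$. This gives $K\otimes_R A=\bigoplus_{j<\ell}K[x]p^j$, which is the eigenspace decomposition for conjugation by $x$. Conjugation by $p$ then gives $Z(K\otimes_R A)=K$, and $p^\ell$ is central because it commutes with $x$ and $p$. The isomorphism with $(a,p^\ell)^K_\zeta$ is read off directly. Nonvanishing comes from an explicit binomial expansion giving $p^\ell=(-1)^{\ell-1}ab+(\zeta-1)^{-\ell}r^\ell$.

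You proceed differently at each stage. You get centrality from the generators $x,y$. You locate central elements in the $x^iy^j$ basis by a leading-term argument, which even gives $p^\ell\in R$. You prove $p^\ell\neq 0$ by reducing modulo $r$. You obtain the isomorphism from simplicity of the symbol algebra plus the rank count of Proposition~\ref{prop:defSymbolBasis}, so you never need to show that $x,p$ generate.

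Your Step 3 is slicker and makes it transparent why $ab\notin(r)$ is exactly the right hypothesis. The paper's computation, however, yields the exact value of $p^\ell$ (Lemma~\ref{lem:ptoe}). That exact value is what is used afterwards as the discriminant $c$ in Theorem~\ref{thm:defSymbolRamification} and in every discriminant computation of Corollary~\ref{cor:constructTypeA}. Your congruence only determines $p^\ell$ modulo $(r)$.

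Two small points need filling in.
\begin{enumerate}
\item The claim that the centralizer of $x$ is $R[x]$ needs the same leading-term argument you sketch for the second half. The top $y$-coefficient of $fx-xf$ is $(\zeta^J-1)xf_J(x)$. This is nonzero because $x$ is a non-zero-divisor on the free module $R[x]/(x^\ell-a)$ when $a\neq 0$, and $\zeta^J\neq 1$ for $0<J<\ell$.
\item Concluding $p^\ell\neq 0$ from $p^\ell\equiv\pm ab \pmod{rA}$ uses $rA\cap R=rR$. This holds because $1$ is part of an $R$-basis of $A$, so $ab\notin rA$.
\end{enumerate}

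Your aside about the $\mathbb{Z}/\ell\times\mathbb{Z}/\ell$-grading is wrong as stated, since $yx-\zeta xy-r$ mixes degrees $(1,1)$ and $(0,0)$. You abandon it, though, so nothing depends on it.
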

\begin{proof}
Consider the subfield $K[x] \subset K \otimes_R A$ and note that $V = K[x] + K[x]y = K[x] + K[x] p$. Taking powers of $V$, we see that 
$$K\otimes_R A = K[x] \oplus K[x] p \oplus \ldots \oplus K[x] p^{\ell-1}.$$
From the skew-commutation relations Equation~(\ref{eq:pSkewCommutes}), this is the eigenspace decomposition with respect to conjugation by $x$. Examining the conjugation by $p$ action, we see that $Z(K \otimes_R A) = K$. Note that $p^\ell$ commutes with both $x$ and $p$ so is central and we thus have $K\otimes_R A \simeq (a,p^\ell)^K_{\zeta}$. An analogous proof shows that $K\otimes_R A \simeq (p^\ell,b)_{\zeta}^K$. 

It remains only to prove $p^\ell$ is non-zero as follows. We first expand $p^\ell = (xy + \frac{1}{\zeta -1}r)^\ell$ using the binomial theorem and then use the relation $yx = \zeta xy + r$ to rewrite $p^\ell$ as an $R$-linear combination of the basis elements $x^i y^j$ as in Proposition~\ref{prop:defSymbolBasis}. The ``middle terms'' ${\ell \choose i} (xy)^i (\zeta-1)^{i-\ell}r^{\ell-i}$, where  $0<i<\ell$ will only produce terms of the form $x^jy^j$ for $0<j<\ell$ so by centrality of $p^\ell$ must cancel with other terms in $p^\ell$. Similarly, all terms of $(xy)^\ell$ cancel except  
$$\zeta^{\ell \choose 2} x^\ell y^\ell = (-1)^{\ell-1}ab$$
so $p^\ell = (-1)^{\ell-1}ab + (\zeta-1)^{-\ell}r^{\ell}$. Our assumption that $ab \notin (r)$ ensures then that $p^\ell \neq 0$. 
\end{proof}
In the course of the proof we established the following result.
\begin{lemma} \label{lem:ptoe}
In $(a,b)^R_{\zeta,r}$ where $\zeta$ is a primitive $\ell$-th root of unity, we have 
$$p^\ell = (-1)^{\ell-1}ab + (\zeta-1)^{-\ell}r^{\ell}.$$
\end{lemma}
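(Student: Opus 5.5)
The plan is to avoid the binomial expansion used in the proof of Proposition~\ref{prop:defSymbolGenericallySymbol} and instead obtain $p^\ell$ from a factorisation of $x^\ell y^\ell = ab$ as a polynomial in $p$. We assume, as the definition of $p$ already does, that $\zeta - 1$ is invertible in $R$. Set $c := \frac{1}{\zeta-1} r$, so that $p = xy + c$. The relation $yx = \zeta xy + r$ gives
\[
yx + c = \zeta xy + r + c = \zeta(xy + c) = \zeta p ,
\]
since $r + c = \zeta c$. The first step is to record the skew-commutation $px = \zeta xp$ from Equation~(\ref{eq:pSkewCommutes}) in the form $x\,p = \zeta^{-1} p\,x$. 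Hence for every polynomial $f \in R[t]$ we have $x f(p) = f(\zeta^{-1}p)\, x$; this uses that $R$ is central in $A$.

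The second step is an induction on $k \geq 1$ proving
\[
x^k y^k = \prod_{j=0}^{k-1} \left(\zeta^{-j} p - c\right).
\]
The case $k=1$ is just $xy = p - c$. For the inductive step, suppose $x^k y^k = f_k(p)$ with $f_k(t) = \prod_{j=0}^{k-1}(\zeta^{-j}t - c)$. Then
\[
x^{k+1}y^{k+1} = x f_k(p) y = f_k(\zeta^{-1}p)\, xy = f_k(\zeta^{-1}p)(p - c),
\]
which is exactly $\prod_{j=0}^{k}(\zeta^{-j}p - c)$.

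The third step takes $k = \ell$, where $x^\ell y^\ell = ab$, and evaluates the product. Pulling out the factors $\zeta^{-j}$ gives
\[
\prod_{j=0}^{\ell-1}(\zeta^{-j}p - c) = \zeta^{-\ell(\ell-1)/2}\prod_{j=0}^{\ell-1}(p - \zeta^{j}c) = \zeta^{-\ell(\ell-1)/2}\left(p^\ell - c^\ell\right).
\]
The last equality uses the identity $\prod_j (t - \zeta^j s) = t^\ell - s^\ell$. This holds because $\zeta$ is a primitive $\ell$-th root of unity and the elements $p$ and $c$ commute, so the product is computed in the commutative subring $R[p]$. The scalar $\zeta^{-\ell(\ell-1)/2}$ equals $1$ when $\ell$ is odd. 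When $\ell$ is even it equals $\zeta^{-\ell/2} = -1$. In both cases it is $(-1)^{\ell-1}$.

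Therefore $ab = (-1)^{\ell-1}(p^\ell - c^\ell)$, which rearranges to
\[
p^\ell = (-1)^{\ell-1}ab + (\zeta-1)^{-\ell} r^\ell ,
\]
as claimed. The only delicate point is keeping the direction of the twist straight, namely that $x$ passes $p$ as $\zeta^{-1}$ rather than $\zeta$. The sign and power of $\zeta$ bookkeeping in the third step is then routine.
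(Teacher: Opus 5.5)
Your proof is correct, and it takes a different route from the paper.

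\textbf{The paper's route.} The paper expands $p^\ell = (xy + \frac{1}{\zeta-1}r)^\ell$ by the binomial theorem and rewrites each term in the basis $\{x^iy^j\}$ of Proposition~\ref{prop:defSymbolBasis}. It then argues that $p^\ell$ is central in $K\otimes_R A$, whose centre was shown to be $K$, so every non-constant basis term must cancel. The surviving constant term is read off as $\zeta^{\binom{\ell}{2}}x^\ell y^\ell + (\zeta-1)^{-\ell}r^\ell$.

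\textbf{Your route.} You prove the exact factorisation $x^ky^k = \prod_{j=0}^{k-1}(\zeta^{-j}p - c)$ using only the relation $xp = \zeta^{-1}px$. You then apply the identity $\prod_j (t-\zeta^j c) = t^\ell - c^\ell$ inside the commutative subring $R[p]$.

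\textbf{What your route buys.}
\begin{itemize}
\item You need neither the diamond-lemma basis nor the computation of the centre.
\item You do not use the hypothesis $ab \notin (r)$ of Proposition~\ref{prop:defSymbolGenericallySymbol}, under which the paper's argument is carried out. So you establish the lemma in the unconditional form in which it is actually stated.
\item You obtain $p^\ell \in R$, hence its centrality, as an output rather than an input.
\item The only properties of $R$ you use are that $\zeta-1$ is invertible and that $\zeta$ is a primitive root of unity in a domain. The latter gives the cyclotomic factorisation and $\zeta^{\ell/2}=-1$ for even $\ell$. Both hold in the paper's setting.
\end{itemize}

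\textbf{What the paper's route buys.} It fits naturally into the proof of Proposition~\ref{prop:defSymbolGenericallySymbol}, where the basis and the centre are needed anyway. However, its cancellation step is only sketched, whereas your argument is a complete computation.
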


\begin{theorem}  \label{thm:defSymbolRamification}
Let $R$ be a commutative normal domain, $\zeta \in R$ a primitive $\ell$-th root of unity and $A = (a,b)^R_{\zeta,r}$. Suppose the following hold:
\begin{itemize}
    \item $ab\notin (r)$,
    \item the codimension of $(a,b) \triangleleft R $ is $> 1$ and,
    \item $c:=(-1)^{\ell-1}ab + (\zeta-1)^{-\ell}r^{\ell}$ is prime. 
\end{itemize}
Then 
\begin{enumerate}
    \item $A$ is Azumaya away from the zeros of the ideals $(a,b)$ and $(c)$.
    \item The element $p = xy + \frac{1}{\zeta -1}r \in A$ is normal and 
    $$\Abar:= A/(p) \simeq \frac{R[x,y]}{(x^\ell-a,y^\ell-b,(\zeta -1)xy + r)}.$$ 
    \item If $\Abar$ is a domain, then $A$ is a maximal order whose ramification above $(c)$ is given by $\Abar$. 
    \item If $\Abar$ is a regular domain, then $A$ has finite global dimension. 
\end{enumerate}
\end{theorem}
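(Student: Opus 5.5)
We prove the four parts in order: (2) first, since the normality of $p$ feeds into (1) and (3).

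For (2), the relations (\ref{eq:pSkewCommutes}) give $px = \zeta x p$ and $yp = \zeta p y$. Together with centrality of $R$, these show $pA = Ap$, so $p$ is normal. For the quotient, note that modulo $p$ we have $xy = -\frac{1}{\zeta-1}r$. Then
$$yx = \zeta xy + r = -\frac{\zeta}{\zeta - 1} r + r = -\frac{1}{\zeta-1}r = xy,$$
so $x$ and $y$ commute in $A/(p)$. Hence $A/(p)$ is the commutative ring displayed in the statement. Conversely, the displayed ring satisfies all the defining relations of $A$ and kills $p$, so the natural surjections in both directions are inverse to each other.

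For (1), first work on the locus $D(c)$ where $c$ is a unit. By Lemma~\ref{lem:ptoe}, $p^\ell = c$ is then a unit, so $p$ is invertible. Then $A[c^{-1}]$ is free on the basis $x^ip^j$: this basis is obtained from $x^iy^j$ by a triangular change using $p = xy + \frac{r}{\zeta-1}$ and the invertibility of $x$ or $p$. This identifies $A[c^{-1}] \cong (a,c)_\zeta$ over $R[c^{-1}]$, a symbol, which is Azumaya wherever $a$ is a unit. By the symmetric argument using $(c,b)_\zeta$, it is also Azumaya wherever $b$ is a unit.

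For a simpler, more uniform route to (1), use the discriminant. The reduced trace form on the basis $x^iy^j$ should have discriminant equal, up to a unit, to a power of $ab$ times a power of $c$; this can be checked after passing to $K$, using Proposition~\ref{prop:defSymbolGenericallySymbol}. Since $A$ is free of rank $\ell^2$, $A$ is Azumaya exactly where this discriminant is a unit, which gives (1).

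For (3), $R$ is normal and $A$ is free, hence reflexive. A reflexive order is maximal once it is maximal at every height one prime. By (1) and the hypothesis that $(a,b)$ has codimension $>1$, the only height one prime where $A$ can fail to be Azumaya is $\mathfrak{p} = (c)$. Localise there. Since $p^\ell = c$ is a uniformiser of $R_\mathfrak{p}$ and $p$ is normal, the two-sided ideal $pA_\mathfrak{p}$ satisfies $(pA_\mathfrak{p})^\ell = cA_\mathfrak{p}$, so it is contained in the radical. The quotient $A_\mathfrak{p}/pA_\mathfrak{p} = \Abar_\mathfrak{p}$ is a domain by hypothesis, and it is finite over the field $\kappa(\mathfrak{p})$, so it is a field. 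Therefore $pA_\mathfrak{p} = \rad A_\mathfrak{p}$ is principal and $A_\mathfrak{p}/\rad$ is simple. A standard criterion (Auslander--Goldman) says that an order over a DVR whose radical is principal, generated by a normal element, with simple quotient, is maximal. The centre of the simple quotient is $\Abar_\mathfrak{p}$ itself, so the ramification above $(c)$ is $\Abar$. The main obstacle is invoking this criterion cleanly, in particular the hereditary step: I would show directly that $\rad$ is invertible, namely $p^{-1}$ times it equals $A_\mathfrak{p}$.

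For (4), the only concern is at the maximal ideals lying on $V(c)$, since elsewhere $A$ is Azumaya over a regular ring. Note that where $c$ is a non-unit, regularity of $\Abar$ forces the ambient local ring to be regular there as well, since $R/(c) \subset \Abar$ is finite. Now $p$ is a normal regular element: it is regular because $p^\ell = c \neq 0$ and $A$ is a domain, being an order in a division algebra. Moreover $A/pA = \Abar$ has finite global dimension. A standard change-of-rings result for normal regular elements then gives $\mathrm{gldim}\, A \le \mathrm{gldim}\, \Abar + 1$. For this I would apply Rees' lemma locally, using that $p$ lies in the Jacobson radical at those points, as in the commutative case.
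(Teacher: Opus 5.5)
Your proofs of (2) and (3) follow the paper's. For (2), normality of $p$ comes from (\ref{eq:pSkewCommutes}) and commutativity of $A/(p)$ from $yx-xy=(\zeta-1)p$. For (3), you reduce to the height one prime $(c)$, where the regular normal element $p$ generates the radical with field quotient, so \cite[Theorem~2.3]{AG} applies. Your Rees-lemma argument for (4) at maximal ideals containing $c$ is also what the paper's one-line proof intends.

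\textbf{Part (1).} Your first argument rests on a false identification. Inverting $c$ makes $p$ a unit, but $p^{-1}=c^{-1}p^{\ell-1}$ already lies in $R[c^{-1}]\langle x,p\rangle$, and that algebra does not contain $y$. Writing $x^ip^j$ in the basis $x^{i'}y^{j'}$ gives a matrix that is block triangular in the $y$-degree. Its diagonal blocks pick up a factor $a$ whenever $x^{i+j}$ wraps around, so its determinant is a unit times $a^{\ell(\ell-1)/2}$; for $\ell=2$, $xp=ay-\frac r2x$. Hence $A[c^{-1}]\not\cong(a,c)^{R[c^{-1}]}_\zeta$ in general. For example, at the prime $(u)$ in Corollary~\ref{cor:constructTypeA}(\ref{item:typeA}), $A$ is Azumaya but $(u,c)_\zeta$ is not.

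The fix is to invert $a$ instead. Then $y=x^{-1}(p-\frac{r}{\zeta-1})$ and $A[a^{-1}]\cong(a,c)^{R[a^{-1}]}_\zeta$; symmetrically, $A[b^{-1}]\cong(c,b)^{R[b^{-1}]}_\zeta$ via $p,y$. This is essentially the paper's route (its display (\ref{eq:Abinverse}) should read $A[a^{-1}]$), and it yields exactly the conclusion you state.

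Your discriminant route is genuinely different and worth completing, because it gives more. The symbol has discriminant a unit times $(ac)^{\ell(\ell-1)}$ on $\{x^ip^j\}$. By the determinant above, the discriminant of $\{x^iy^j\}$ is then a unit times $c^{\ell(\ell-1)}$, with no $ab$ factor; for $\ell=2$ it is $-(r^2-4ab)^2$. So $A$ is Azumaya on all of $D(c)$, and the codimension hypothesis on $(a,b)$ is not needed for (3).

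\textbf{Part (4).} Away from $V(c)$, your assertion that $A$ is ``Azumaya over a regular ring'' is unsupported for two reasons.
\begin{itemize}
\item Your first version of (1), even corrected, does not reach points of $V(a,b)\setminus V(c)$. The discriminant computation does.
\item Regularity of $R$ is not a hypothesis, and it cannot be dropped. Since $A$ is $R$-free, a finite $A_{\mathfrak m}$-projective resolution of $A/\mathfrak m A$ is a finite $R_{\mathfrak m}$-projective resolution of $\kappa(\mathfrak m)^{\oplus\ell^2}$. So finite global dimension of $A_{\mathfrak m}$ forces $R_{\mathfrak m}$ to be regular.
\end{itemize}
Thus (4) needs $R$ regular off $V(c)$. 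That holds, for example, when $R$ is local as in the application $R=k[[u,v,w]]$: there the unique maximal ideal lies on $V(c)$ and your argument is complete. The paper leaves this assumption implicit too.

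Two smaller points. Your remark that regularity of $\Abar$ forces $R_{\mathfrak m}$ regular ``since $R/(c)\subset\Abar$ is finite'' is not a valid inference. In the type $A_{\ell-1}$, $n=1$ example, $\Abar$ is regular over the singular $R/(c)$. Its conclusion does hold here by the resolution argument above, but you do not need it. Likewise, $p$ is regular simply because $c\neq 0$ and $A$ is $R$-torsion-free; you do not need to know that $A$ is a domain.
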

\begin{proof}
We know from Propositions~\ref{prop:defSymbolBasis} and \ref{prop:defSymbolGenericallySymbol} that $A$ is a reflexive order in a central simple $K$-algebra. As in the proof of Proposition~\ref{prop:defSymbolGenericallySymbol}, the skew-commutation relations~(\ref{eq:pSkewCommutes}) and Lemma~\ref{lem:ptoe} show that 
\begin{equation} \label{eq:Abinverse}
A[b^{-1}] = R[b^{-1}]\langle x,p\rangle \simeq (a,c)^{R[b^{-1}]}_{\zeta}
\end{equation}
which is Azumaya away from the zeros of $a$ and $c$. Considering $A[a^{-1}]$ in the same way proves (1).

To prove (2), note first that the skew-commutation relations~(\ref{eq:pSkewCommutes}) ensure that $p$ is normal. Furthermore, the relation $yx = \zeta xy + r$ in $A$ can be re-written as $yx - xy = (\zeta-1)xy + r = (\zeta-1) p$ so (2) follows as does (4). 

We now turn to proving that $A$ is maximal under the assumption in (3). 
Since we know $A$ is reflexive and, by part~(1) Azumaya at every codimension prime except $(c)$, it suffices to show that $A \otimes_R R_{(c)}$ is maximal. First note that, as in Equation~(\ref{eq:Abinverse})
$$A \otimes_R R_{(c)} \simeq (a,c)^{R_{(c)}}_{\zeta}$$
is hereditary since it contains the regular normal element $p$ and the quotient $(A \otimes_R R_{(c)})/(p)$ is a field. By \cite[Theorem~2.3]{AG} and our hypothesis on $\Abar$, we see that $A \otimes_R R_{(c)}$ and hence $A$ is maximal. Furthermore, the ramification above $(c)$ is given by the quotient $\Abar$. 
\end{proof}

\section{Some terminal orders ramified on with surfaces Kleinian singularities}  \label{sec:terminalRamOnKleinSing}

In this section, we use the deformed symbol algebras studied in Section~\ref{sec:deformSymbols} to construct some terminal orders over $R = k[[u,v,w]]$ with non-toric ramification. 

The search for terminal ramification is guided by the following key observation. 

\begin{proposition}  \label{prop:ramOnADEisTerminal}
Let $A$ be a maximal $R$-order which is ramified only on a surface with a Kleinian singularity $D \subset \Spec R$. Then $A$ is terminal. 
\end{proposition}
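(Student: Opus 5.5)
Set $X = \Spec R$, $R = k[[u,v,w]]$. Since $X$ is regular, every Weil divisor is Cartier, so $K_A = K_X + (1-\frac1e)D$ is $\bQ$-Cartier, where $e$ is the ramification index of $A$ along $D$. The plan is to compare the discrepancies of $A$ with the log discrepancies of the pair $(X,D)$. We then use the fact that $(X,D)$ is purely log terminal: $D$ is a normal Cartier divisor with only canonical, hence log terminal, singularities, so inversion of adjunction makes $(X,D)$ plt near $D$.

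Let $\pi\colon \Xtilde \to X$ be proper birational and $\Atilde \supseteq \pi^*A$ a maximal order. For each exceptional prime divisor $E$ with ramification index $e_E$ we need
\[
 a(E; A) = a(E,X) + 1 - \tfrac{1}{e_E} - \left(1 - \tfrac1e\right)\mathrm{mult}_E\,\pi^*D ,
\]
where $a(E,X)$ is the discrepancy of the regular scheme $X$. Non-exceptional divisors contribute nothing new, since $\Atilde$ and $A$ agree generically along strict transforms, where $A$ is already maximal.

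\begin{enumerate}
\item If $E$ is not centred in $D$, then $\mathrm{mult}_E \pi^*D = 0$. In this case $a(E;A) \ge a(E,X) > 0$ because $X$ is smooth of dimension 3.
\item If $E$ is centred in $D$, first note that $1 - \frac1{e_E} \ge 0$ and $1-\frac1e < 1$, so
\[
 a(E;A) \ge a(E,X) - \left(1-\tfrac1e\right)\mathrm{mult}_E\pi^*D > a(E,X) - \mathrm{mult}_E \pi^*D = a(E;X,D).
\]
The strict inequality uses $\mathrm{mult}_E\pi^*D > 0$. Since $(X,D)$ is plt, $a(E;X,D) > -1$ for every exceptional $E$, which only gives $a(E;A) > -1$. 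This is too weak, so the estimate has to be sharpened.
\item Sharpening: since $a(E;X,D) > -1$ and discrepancies over a smooth $3$-fold with integral boundary are integers, in fact $a(E;X,D) \ge 0$. Hence $a(E;A) > 0$. The strictness comes from $\frac1e \cdot \mathrm{mult}_E \pi^*D > 0$ and needs $e > 1$, which holds because $D$ is genuinely ramified.
\end{enumerate}

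The main obstacle is justifying that $(X,D)$ is canonical rather than merely plt, i.e.\ $a(E;X,D) \ge 0$ for all exceptional $E$. My first attempt is integrality: $K_X + D$ is Cartier, so every $a(E;X,D)$ is an integer, and plt gives $> -1$, hence $\ge 0$. The plt step itself follows from inversion of adjunction, since $D$ is normal (Kleinian singularities are normal) with klt singularities.

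Finally, to match Definition~\ref{def:terminal} exactly, I would phrase the computation as $K_{\Atilde} - \pi^*K_A = \sum_E a(E;A)E$, summed over exceptional $E$. This uses $K_{\Atilde} = K_{\Xtilde} + \Delta_{\Atilde}$, with the coefficient of $E$ in $\Delta_{\Atilde}$ equal to $1 - \frac{1}{e_E} \ge 0$. Nonnegativity of each coefficient then gives terminality.
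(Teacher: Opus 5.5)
Your proof is correct and has the same skeleton as the paper's. Both arguments show that $(X,D)$ is canonical, deduce that $(X,(1-\frac1e)D)$ is terminal, and then note that the extra term $1-\frac{1}{e_E}\geq 0$ coming from $\Delta_{\tilde{A}}$ only helps.

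Your explicit estimate, with $m_E=\mathrm{mult}_E\pi^*D$,
\[
a(E;A)\;\geq\;\bigl(a(E,X)-m_E\bigr)+\frac{m_E}{e}\;>\;0,
\]
is an unpacking of what the paper gets by citing \cite[Corollary~2.35(4)]{KM} and \cite[Remark~2.19]{11authors}.

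The one genuine difference is how you obtain canonicity of $(X,D)$. The paper appeals to the proof of \cite[Theorem~5.34]{KM}. You instead use inversion of adjunction to get that $(X,D)$ is plt, and then upgrade to canonical because $K_X+D$ is Cartier, so every $a(E;X,D)$ is an integer greater than $-1$. This is valid and is a cleaner citation than ``the proof of'' a theorem whose hypotheses don't literally apply, though it rests on a heavier result.

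One minor point: the strict inequality $m_E/e>0$ holds for every $e\geq 1$, so your remark that strictness needs $e>1$ is unnecessary.
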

\begin{proof}
First note that the log pair $(\Spec R, D)$ is canonical by \cite[Theorem~5.34]{KM} or more, accurately, its proof. (Indeed, the proof they give that $D$ Kleinian $\implies (\Spec R,D)$ canonical does not use the hypothesis that $D$ is a general hypersurface section). We conclude that the log pair $(\Spec R, \Delta_A)$ associated to $A$ is terminal by \cite[Corollary~2.35(4)]{KM} and the fact that $\Delta_A = (1 - \frac{1}{e})D$ where $e$ is the ramification index. Finally, it follows that $A$ is also terminal by \cite[Remark~2.19]{11authors}.
\end{proof}

Suppose now that $A$ is a maximal $R$-order ramified only along a Kleinian singularity $D \subset \Spec R$. The ramification data is given by a cyclic cover $\Dtilde \to D$ which must be unramified in codimension one by the coniveau spectral sequence. The possibilities of such are bounded by the \'etale fundamental group of $D - 0$ where $0$ is the closed point. Now $D = \Spec k[[x,y]]^G$, for some finite subgroup $G < SL_2$ so we must have $\Dtilde = \Spec k[[x,y]]^N$ where $N \triangleleft G$ is a normal subgroup with cyclic quotient $G/N$. These possibilities are quite constrained (see for example from \cite[Section~3]{CI21}). We list them below using the standard ADE notation. The third column indicates the ramification index $\ell$ and the parameter $n$ is a positive integer. In most cases, the group $G$ has cyclic abelianisation $G^{ab} = G/[G,G]$ so the subgroup $N$ is determined by its index. The only exception is the case of the binary dihedral group $G$ of order $4m$ where $m$ is even. In this case $G^{ab}$ is the Klein 4 group. Note there are two rows with $G = D_{2n}, N\simeq D_{n+1}$ corresponding to the two choices for the actual subgroup $N$. The columns headed with $a,b,r$ will be referred to in the Corollary~\ref{cor:constructTypeA} below. 

\begin{equation}  \label{eq:tableOfGroups}
\begin{array}{c|c|c|c|c|c}
    G & N & \ell & a & b & r \\
    \hline 
    A_{n\ell-1} & A_{n-1} & \ell & u & v & w^n \\
    D_{n+2} & A_{2n-1} & 2 & v^2+w^n & w & u \\
    D_{2n} & D_{n+1} & 2 & (v+w)^{n-1}  & (v-w^{n-1})w & u \\
    D_{2n} & D_{n+1} & 2 & (v+w)^{n-1}w & (v-w^{n-1}) & u \\
    D_{2n+3} & A_{2n} & 4 & ? & ? & ? \\
    E_6 & D_4 & 3 & u^2+v & u^2-v & w  \\
    E_7 & E_6 & 2 & v & v^2-u^3 & w 
\end{array}
\end{equation}
We seek to construct maximal orders with these ramification data via the deformed symbol algebra construction. 

\begin{corollary}  \label{cor:constructTypeA}
Let $R = k[[u,v,w]]$, and let $\zeta\in k$ be a primitive $\ell$-th root of unity and $A$ be the deformed symbol algebra $(a,b)^R_{\zeta,r}$ defined in Equation~(\ref{eq:defSymbol}) with standard generators $x,y$. 
\begin{enumerate}
\item \label{item:typeA} If $n$ is a positive integer, $a=u, b=v, r = w^n$ then $A$ is a terminal order ramified on a Kleinian singularity of type $A_{n\ell-1}$ and whose ramification cover is the $\ell$-fold cover, unramified in codimension one given by a type $A_{n-1}$ singularity.  

Furthermore, if $n=1$, then $A$ has global dimension three. 
\item \label{item:typeDA} Let $\ell=2, a=v^2+w^n, b=w, r=u$. Then $A$ is a terminal order ramified on a Kleinian singularity of type $D_{n+2}$ with ramification cover of type $A_{2n-1}$. 
\item \label{item:typeDD} We consider two cases depending on whether the index $i$ below is 0 or 1. Let $\ell=2, a=(v+w^{n-1})w^i, b = (v-w^{n-1})w^{1-i}, r=u$. Then $A$ is a terminal order ramified on a Kleinian singularity of type $D_{2n}$ whose ramification cover is a type $D_{n+1}$ singularity.
\item \label{item:typeE6} If $\ell=3, a = u^2+v, b=u^2-v, r= w$, then $A$ is a terminal order ramified on a Kleinian singularity of type $E_6$. 
\item \label{item:typeE7} If $\ell=2, a=v, b=v^2-u^3, r=w$ then $A$ is a terminal order ramified on a Kleinian singularity of type $E_7$.
\end{enumerate}
\end{corollary}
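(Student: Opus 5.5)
The plan is to apply Theorem~\ref{thm:defSymbolRamification} uniformly in each case and then finish with Proposition~\ref{prop:ramOnADEisTerminal}. For each choice of $(\ell,a,b,r)$ we check four things. First, $ab\notin(r)$: here $r$ is $w^n$, $u$ or $w$, and $ab$ visibly has a term not divisible by $r$. Second, $(a,b)$ has codimension $\ge 2$; in each case $a,b$ have no common factor, for instance $v^2+w^n$ and $w$. Third, compute $c=(-1)^{\ell-1}ab+(\zeta-1)^{-\ell}r^\ell$ via Lemma~\ref{lem:ptoe}. Fourth, identify $\Abar=R[x,y]/(x^\ell-a,y^\ell-b,(\zeta-1)xy+r)$.

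The key observation is that in $\Abar$ we can eliminate variables. In case~(\ref{item:typeA}), $u=x^\ell$, $v=y^\ell$ and $(\zeta-1)xy=-w^n$, so
\[
\Abar\simeq k[[x,y,w]]/((\zeta-1)xy+w^n).
\]
This is an $A_{n-1}$ singularity, hence a normal domain. Meanwhile $c=(-1)^{\ell-1}uv+(\zeta-1)^{-\ell}w^{n\ell}$, which after rescaling is $uv=w^{n\ell}$ of type $A_{n\ell-1}$, irreducible and hence prime in the UFD $R$.

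In the $\ell=2$ cases we have $\zeta=-1$, so $r=u=2xy$ is eliminated and
\[
\Abar\simeq k[[x,y,v,w]]/(x^2-a,\,y^2-b).
\]
In case~(\ref{item:typeDA}), $w=y^2$ and $\Abar=k[[x,y,v]]/(x^2-v^2-y^{2n})$, which is of type $A_{2n-1}$. Also $c=-(v^2+w^n)w+u^2/4$, which is of type $D_{n+2}$ after coordinate changes. Case~(\ref{item:typeDD}) and case~(\ref{item:typeE7}) go the same way. For $E_7$, $v=x^2$ gives $\Abar=k[[x,y,u]]/(y^2-x^4+u^3)$, which is $E_6$, and $c=-v(v^2-u^3)+w^2/4$, which is $E_7$. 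For case~(\ref{item:typeE6}), $w=-(\zeta-1)xy$ and the relations $x^3=u^2+v$, $y^3=u^2-v$ combine to give $x^3+y^3=2u^2$, a $D_4$ singularity. Here $c=(u^4-v^2)+(\zeta-1)^{-3}w^3$, which is $E_6$. In each case $\Abar$ is a normal surface singularity, so it is a domain, and part~(3) of Theorem~\ref{thm:defSymbolRamification} gives maximality with ramification cover $\Abar\to R/(c)$. Parts~(1)--(2) show that $A$ is ramified only along $c=0$, so Proposition~\ref{prop:ramOnADEisTerminal} yields terminality. For $n=1$ in case~(\ref{item:typeA}), $\Abar\simeq k[[x,y]]$ is regular, so part~(4) gives finite global dimension. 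Since $A$ is free over the regular ring $R$ of dimension three, it is Cohen--Macaulay, and the global dimension is exactly $3$.

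The main obstacle is the normal-form bookkeeping in the $D_{2n}$ and $D_{n+2}$ cases. One must verify that $c$ is analytically of the stated Kleinian type, which requires completing squares and substitutions such as $v\pm w^{n-1}$. One must also check that $\Abar$ is a domain of the right type, e.g.\ $x^2=(v+w^{n-1})w^i$, $y^2=(v-w^{n-1})w^{1-i}$, where eliminating $v$ gives $x^2w^{1-i}-y^2w^{i}=2w^{n}$ or a similar $D_{n+1}$ equation. Irreducibility of $c$ follows once it is identified as an isolated ADE singularity. We also expect the identification of the cover with the subgroup $N$ in table~(\ref{eq:tableOfGroups}) to need a degree and type comparison. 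Since $\Abar\to R/(c)$ has degree $\ell$ and is étale in codimension one, it is determined by its index except in the $D_{2n}$ case, where the two values $i=0,1$ give the two subgroups.
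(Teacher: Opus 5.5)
Your proposal is correct and follows the paper's own argument: compute $\bar{A}=A/(p)$ and the discriminant $c$ case by case, then invoke Theorem~\ref{thm:defSymbolRamification} and Proposition~\ref{prop:ramOnADEisTerminal}. You are slightly more thorough than the paper, since you also check the hypotheses $ab\notin(r)$, codimension of $(a,b)$ at least $2$ and primality of $c$, and you upgrade ``finite global dimension'' to exactly $3$ via Cohen--Macaulayness; there is one trivial slip, namely that in the $E_7$ case the eliminated element is $r=w=2xy$, not $u$.
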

\begin{proof}
In each case, the theorem will follow from Theorem~\ref{thm:defSymbolRamification} and Proposition~\ref{prop:ramOnADEisTerminal} after computing 
$$ \Abar := A/(xy + \frac{1}{\zeta -1}r)$$
and verifying that it is a domain giving the correct ramification data. 

In case~(\ref{item:typeA}) we find 
$$\Abar := A/(xy + \frac{1}{\zeta -1}w^{n}) \simeq k[[x,y,w]]/((\zeta -1)xy + w^n)$$
which is the Kleinian singularity of type $A_{n-1}$ so in particular, regular when $n=1$. Hence $A$ is a maximal order which is ramified on the locus $0=(-1)^{\ell-1}uv + (\zeta -1)^{-\ell}w^{n\ell}$ which is a type $A_{n\ell-1}$ Kleinian singularity. 

In case~(\ref{item:typeDA}), we have 
$$\Abar \simeq k[[x,y,v]]/(x^2-v^2-y^{2n})$$
which defines a type $A_{2n-1}$ singularity and the discriminant here is $\frac{1}{4}u^2 - (v^2+w^n)w$ which defines a type $D_{n+2}$ singularity. 

In Case~(\ref{item:typeDD}), the two possibilities for $i$ are similar and we only consider here the case where $i=0$. Then 
$$\Abar \simeq k[[x,y,w]]/(y^2-(x^2-2w^{n-1})w)$$
which is a type $D_{n+1}$ singularity. The discriminant here is $\frac{1}{4}u^2 - (v+w^{n-1})(v-w^{n-1})w $ which defines a type $D_{2n}$ singularity. 

In case~(\ref{item:typeE6}) we have 
$$ \Abar \simeq k[[x,y,u]]/(y^3+x^3 - 2u^2)$$
which is the type $D_4$ Kleinian singularity. The discriminant here is 
$$(u^2+v)(u^2-v) + (\zeta -1)^{-3}w^3 = u^4-v^2 + (\zeta -1)^{-3}w^3$$
which defines a type $E_6$ Kleinian singularity.

In case~(\ref{item:typeE7}), we have 
$$\Abar \simeq k[[x,y,u]]/(y^2-x^4+u^3)$$
which is a type $E_6$ singularity and the discriminant is $\frac{1}{4}w^2-v(v^2-u^3)$ which defines a type $E_7$ singularity. 
\end{proof}

Note that the Corollary covers all the rows of the table in Equation~(\ref{eq:tableOfGroups}) except where $G$ is type $D_{2n+3}$ and $N$ is type $A_{2n}$. It seems this case cannot be constructed via deformed symbol algebras like the others.

\section{The even Clifford Sklyanin algebra}  \label{Sklyanin}

Our final example of a terminal 3-fold order comes from Sklyanin algebras. Let $A$ be an AS-regular algebra of dimension three constructed using a 2-torsion point of an elliptic curve. These can be conveniently constructed via Clifford algebras as follows.

We start with the polynomial ring $R = k[u,v,w]$ which we consider a graded algebra where the generators $u,v,w$ have degree two. Let $Q \colon V \times V \to R_1 = ku \oplus kv \oplus kw$ be a quadratic form on the three-dimensional vector space $V = kx \oplus ky \oplus kz$ with values linear forms in $u,v,w$. We suppose that the cubic form $\det Q$ defines a smooth elliptic curve in $\mathbb{P}^2_{u,v,w}$ and that $Q$ is {\em non-degenerate} in the sense that its $R$-bilinear extension induces an injective morphism $R \otimes_k V \to (R \otimes_k V)^*$.
Consider the free $R$-algebra $T(V) = R\langle x,y,z \rangle$ which we grade by setting the degrees of the generators $x,y,z$ to be one. As usual, we define the {\em Clifford algebra}
\begin{equation}
Cl(Q) := T(V) / \mathcal{I}
\end{equation}
where $\mathcal{I}$ is the two-sided ideal generated by the relations $v_1v_2 + v_2v_1 - 2Q(v_1,v_2)$ for $v_1,v_2 \in V$. Since these relations all have even degree, $Cl(Q)$ is $\bZ/2\bZ$-graded and we can consider its even graded part $Cl_0(Q)$, the {\em even Clifford algebra}. The following proof will use some familiar facts about Clifford algebras that can for example be found in \cite{CI12}.

\begin{proposition} \label{prop:Sklyanin}
The even Clifford algebra $A = Cl_0(Q)$ above is a terminal $R$-order.
\end{proposition}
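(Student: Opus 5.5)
The approach is the same as in Proposition~\ref{prop:ramOnADEisTerminal}: determine the ramification of $A$, show that the associated log pair $(\Spec R, \Delta_A)$ is terminal, and conclude using \cite[Remark~2.19]{11authors}. The difference is that the ramification divisor is now $D = \{\det Q = 0\}$, the cone over a smooth plane cubic. This is a simple elliptic singularity, which is log canonical but not canonical, so \cite[Theorem~5.34]{KM} does not apply. The discrepancy computation must therefore be done by hand, and the factor $\tfrac12$ coming from ramification index $2$ is what should make it work.

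\emph{Step 1: $A$ is a maximal order ramified only along $D$ with index $2$.} By standard facts from \cite{CI12}, $Cl_0(Q)$ is a free $R$-module of rank $4$ with basis $1, xy, yz, zx$. In particular it is reflexive. It is Azumaya wherever $Q$ is non-degenerate over the residue field, namely off $D$; there it is a quaternion algebra over the local ring. Non-degeneracy of $Q$ over $R$ ensures that the generic fibre is a central simple $K$-algebra of degree $2$.

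At the generic point of $D$ the form has corank exactly one, because $D$ is reduced and generically smooth. Diagonalising over the DVR $R_{(D)}$ gives $Q \simeq \langle 1, \alpha, \det Q\cdot\beta\rangle$ up to units. The even Clifford algebra is then the quaternion order $(-\alpha, -\beta\det Q)$, which is hereditary. Its residue algebra has centre $\kappa(D)(\sqrt{-\alpha})$, the discriminant double cover of the conic bundle, and this is a genuine field extension because $\det Q$ defines a smooth elliptic curve; this is the $2$-torsion point from the construction. By \cite[Theorem~2.3]{AG}, $A\otimes R_{(D)}$ is maximal, and by reflexivity $A$ is maximal. Hence $\Delta_A = \tfrac12 D$.

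\emph{Step 2: $(\bA^3, \tfrac12 D)$ is terminal.} This is the step I expect to be the main obstacle, though I believe it is a direct computation. Let $\pi\colon Y \to \bA^3$ be the blow-up of the origin, with exceptional divisor $E \simeq \bP^2$. Since $D$ has multiplicity $3$ at the origin,
\[
K_Y + \tfrac12 D' = \pi^*\!\left(K_{\bA^3}+\tfrac12 D\right) + \left(2 - \tfrac32\right)E ,
\]
so $E$ has discrepancy $\tfrac12 > 0$. Because $D$ is a cone over a smooth cubic, the strict transform $D'$ is smooth and meets $E$ transversally along the elliptic curve. It follows that $D' + E$ is simple normal crossings, and this is a log resolution.

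For a smooth variety with an snc boundary whose coefficients are $\tfrac12$ on $D'$ and $-\tfrac12$ on $E$, the discrepancy of every further exceptional divisor is computed by blowing up strata; compare \cite[Corollary~2.31]{KM}. A general point has discrepancy $2$. A curve inside $D'$ alone gives $1-\tfrac12 = \tfrac12$. The curve $D'\cap E$ gives $1 - \tfrac12 + \tfrac12 = 1$. A curve in $E$ alone, or points, give larger values. All of these are positive, so the pair is terminal. I would double-check the sign bookkeeping on $E$ here carefully, since this is where an error would most likely hide.

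\emph{Step 3: conclude.} Apply \cite[Remark~2.19]{11authors} exactly as in Proposition~\ref{prop:ramOnADEisTerminal}. One should also note that $K_A = K_X + \tfrac12 D$ is $\bQ$-Cartier, which holds trivially because $\bA^3$ is smooth.
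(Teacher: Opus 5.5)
Your proposal is correct and follows essentially the same route as the paper. Both identify $\Delta_A=\tfrac12 D$, blow up the cone point to get discrepancy $\tfrac12$ for $E$ with $D'+E$ simple normal crossings, control the remaining exceptional divisors via the snc discrepancy formula (the paper cites \cite[Proposition~2.36]{KM} where you cite Corollary~2.31), and conclude with \cite[Remark~2.19]{11authors}. Your Step~1 makes explicit the maximality and non-split double-cover ramification that the paper only asserts in passing.
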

\begin{proof}
First note that $A$ is a reflexive order on $\Spec R = \bA^3$ which, by assumption on $\det Q$ is ramified on a cone $D$ over an elliptic curve in $\bP^2$. Furthermore, $A$ is hereditary in codimension one. The ramification is given by a double cover of $D$, \'etale away from the cone point. The log variety of $A$ is thus $(\bA^3, \frac{1}{2}D)$ and it suffices by \cite[Remark~2.19]{11authors} to show that this is terminal. Let $f \colon Z \to \bA^3$ be the blowup at the cone point $0$, $E$ be the exceptional $\bP^2$ and $D'$ be the strict transform of $D$. Now
\begin{equation}  \label{eq:SklyaninDiscrep}
f^*(K_{\bA^3} + \frac{1}{2}D) - K_Z \equiv \frac{1}{2}D' - \frac{1}{2}E
\end{equation}
so by \cite[Proposition~2.36]{KM}, any exceptional divisor (in some resolution of $\bA^3$) with non-positive discrepancy must have centre on $Z$. However, Equation~(\ref{eq:SklyaninDiscrep}) shows the discrepancy of $E$ is $\frac{1}{2}$.

\end{proof}

\section{Uniqueness results in the regular case}

We show that if $A$ is a local Cohen-Macaulay maximal order in $Q$ of global dimension three over commutative noetherian normal local domain $R$, then any other maximal order $B$ in $Q$ which is also Cohen-Macaulay is Morita equivalent to $A$. We thank Michael Wemyss who suggested this result which holds in the commutative case. We follow \cite[Section~4]{AG} closely only adding details where there are some extra noncommutative considerations. 

Throughout this section, $R$ is a commutative noetherian normal local domain and $A$ is an $R$-algebra which is finite as an $R$-module. We will often use the concepts of {\em Cohen-Macaulay} (by default always maximal for us), {\em reflexive} and {\em depth} for $A$-modules, by which we just mean, the corresponding concepts for their underlying $R$-module. Recall also that the local cohomology functor $H^i_{\frakm}(?)$ can be used to detect depth. 

We start with a presumably well-known noncommutative analogue of the Auslander-Buchsbaum formula.
\begin{proposition}  \label{prop:AuslanderBuchsbaum}
Suppose $A$ is a (maximal) Cohen-Macaulay $R$-algebra with finite global dimension $\text{gl.dim} A = d := \dim R$. For any finitely generated $A$-module $M$ with finite projective dimension we have
$$\text{pd}\, M + \text{depth}\, M = d.$$
\end{proposition}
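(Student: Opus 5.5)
We prove the formula by induction on $\text{pd}\, M$. The engine is the following observation. Since $A$ is Cohen-Macaulay as an $R$-module, every finitely generated projective $A$-module, being a summand of some $A^n$, is Cohen-Macaulay of depth $d$. The case $\text{pd}\, M = 0$ is therefore immediate: $M$ is projective, so $\text{depth}\, M = d$.

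For the inductive step, suppose $\text{pd}\, M = p > 0$ and choose a short exact sequence $0 \to M' \to P \to M \to 0$ of $A$-modules with $P$ finitely generated projective. Then $\text{pd}\, M' = p-1$, so by induction $\text{depth}\, M' = d-p+1$. Apply local cohomology $H^i_{\frakm}$ to the sequence and use $H^i_{\frakm}(P) = 0$ for $i < d$. If $p-1 < d$, then $\text{depth}\, M' < d = \text{depth}\, P$, and the long exact sequence gives $H^{i}_{\frakm}(M) \simeq H^{i+1}_{\frakm}(M')$ for $i+1 < d$. This shows $\text{depth}\, M = \text{depth}\, M' - 1 = d-p$, which is the usual depth lemma.

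It remains to ensure $p \le d$, so that the depth-lemma step is always applicable. This is where the hypothesis $\text{gl.dim}\, A = d$ enters: $\text{pd}\, M \le \text{gl.dim}\, A = d$. There is a boundary case in the step: when $p = d$ we get $\text{depth}\, M' = 1$, and we must show $H^0_{\frakm}(M) \neq 0$. The exact sequence gives $0 \to H^0_{\frakm}(M) \to H^1_{\frakm}(M') \to H^1_{\frakm}(P)$. The last term vanishes if $d \geq 2$, and $H^1_{\frakm}(M') \ne 0$ by hypothesis, so $H^0_{\frakm}(M) \neq 0$. If $d = 1$, then $p = 1$, $M'$ is a nonzero submodule of $P$ of depth $1$, and the inclusion $H^1_{\frakm}(M') \to H^1_{\frakm}(P)$ needs to be shown non-injective. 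I would handle this by the alternative route below.

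\emph{Alternative for the boundary.} The cleanest way to avoid the edge case is to prove the inequality $\text{depth}\, M \le d - \text{pd}\, M$ directly, using the Ext characterisation: $\text{pd}_A M = \max\{i : \operatorname{Ext}^i_A(M, A/\operatorname{rad}A)\neq 0\}$. Alternatively, reduce modulo a regular sequence. If $\text{depth}\, M > 0$, pick $x \in \frakm$ that is regular on both $M$ and $A$; then $A/xA$ is Cohen-Macaulay of dimension $d-1$ and global dimension $d-1$ (the standard result for a central regular element), and $\text{pd}_{A/xA} M/xM = \text{pd}_A M$. Induct on $d$ to obtain the full equality. The base case $\text{depth}\, M = 0$ requires $\text{pd}\, M = d$. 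For this, take a simple $A$-module $S$ embedding in $M$ (it exists since $H^0_{\frakm}(M)\neq 0$). Since $S$ has finite length, $\text{pd}\, S = d$, by the $p = d$ case of the Cohen-Macaulay argument (depth $0$ forces pd $\ge d$ via $\operatorname{Ext}^d_A(S,A) \simeq$ dual of $H^0_{\frakm}$ by local duality, which is nonzero).

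The step I expect to be the main obstacle is exactly this: showing that depth $0$ forces maximal projective dimension, that is, $\operatorname{Ext}^d_A(M,A) \neq 0$ when $H^0_{\frakm}(M) \ne 0$. I would first try the standard route. Local duality over $R$ identifies $\operatorname{Ext}^{d}_A(M,\omega_A)$ with the Matlis dual of $H^0_{\frakm}(M)$, where $\omega_A=\operatorname{Hom}_R(A,\omega_R)$. Then use the finite injective dimension of $\omega_A$ over $A$ (which follows from finite global dimension) to compare with $\operatorname{Ext}$ into projectives.
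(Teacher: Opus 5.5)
There is a genuine gap: the proposal never proves the case $\text{pd}\, M = 1$, which is the only non-formal step, and it misplaces where the difficulty lies. Your assertion ``if $p-1<d$ then $\text{depth}\, M'<d$'' fails for $p=1$. There $M'$ is projective, so $\text{depth}\, M' = d = \text{depth}\, P$, and the local cohomology sequence only gives $H^i_{\frakm}(M)=0$ for $i<d-1$, i.e.\ $\text{depth}\, M\ge d-1$. To finish you must exclude $\text{depth}\, M=d$, that is, show that a maximal Cohen--Macaulay $A$-module of finite projective dimension is projective. You noticed this only for $d=1$, but it arises for every $d$. Conversely, your ``boundary case'' $p=d\ge 2$ is not a boundary case: there $\text{depth}\, M'=1<d$, and the depth lemma applies exactly as for any $p\ge 2$.

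The $p=1$ step is where $\text{gl.dim}\, A=d$ must do real work; using it only to bound $\text{pd}\, M\le d$ cannot suffice. Take $R=k[[t]]$ and $A=\begin{pmatrix} R & R\\ 0 & R\end{pmatrix}$, which is $R$-free (so Cohen--Macaulay) of global dimension $2$. The module $e_{11}A/e_{12}A\cong R$ is maximal Cohen--Macaulay and not projective, with projective dimension $1=d$, so $\text{pd}+\text{depth}=2\neq 1$. The paper closes this step with \cite[Proposition~3.5]{Ramras}: if $A$ is Cohen--Macaulay with $\text{gl.dim}\, A=\dim R$, every maximal Cohen--Macaulay $A$-module is projective. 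Hence $\text{pd}\, M=1$ forces $\text{depth}\, M=d-1$, and the remaining cases are your depth-lemma induction for $p\ge 2$.

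Your alternative route does not repair this. The claim that $A/xA$ has global dimension $d-1$ for a central regular $x$ is false. Finite global dimension is not inherited by such quotients: already $k[[t]]/(t^2)$ fails, and the paper remarks that even generic hyperplane cuts can destroy finite global dimension for noncommutative $A$. The standard change-of-rings theorem deduces finiteness of $\text{gl.dim}\, A$ from that of $\text{gl.dim}\, A/xA$, not conversely. So your induction on $d$ has no valid inductive hypothesis to apply to $M/xM$.

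Finally, the local duality argument you flag as the main obstacle is sound but points the wrong way. After completing, $\Ext^{d-\text{depth}\, M}_A(M,\omega_A)\cong \Ext^{d-\text{depth}\, M}_R(M,\omega_R)\neq 0$, which gives only $\text{pd}\, M+\text{depth}\, M\ge d$, and the depth lemma already yields that. The missing inequality $\text{pd}\, M+\text{depth}\, M\le d$ is exactly the statement that non-projective modules of finite projective dimension are not maximal Cohen--Macaulay, and nothing in your proposal establishes it.
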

\begin{proof}
We argue by induction on $i:= \text{pd}\, M$, the case $i=0$ holding since $A$ is Cohen-Macaulay. Suppose now that $i>0$ and consider a short exact sequence of finite $A$-modules
$$0 \to K \to P \to M \to 0$$
where $P$ is projective. When $i=1$, the long exact sequence in local cohomology shows that $H^j_{\frakm}(M) = 0$ when $j < d-1$ so $\text{depth}\, M \geq d-1$. Also, \cite[Proposition~3.5]{Ramras} states that every maximal Cohen-Macaulay $A$-module must be projective so we must have $\text{depth}\, M = d-1$. Induction using dimension shifting and the long exact sequence in local cohomology as above now completes the proof. 
\end{proof}

We also need a version of \cite[Proposition~4.10]{AG} which detects projective dimension. 

\begin{proposition}  \label{prop:detectProjDim}
Let $A$ be a finite $R$-algebra and $E,M$ finitely generated $A$-modules such that $\text{pd}_A E = n$ and that every simple $A$-module occurs as a quotient of $M$. Then $\Ext_A^n(E,M) \neq 0$. 
\end{proposition}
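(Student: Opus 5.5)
We argue by induction on $n = \text{pd}_A E$, following \cite[Proposition~4.10]{AG}.

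\emph{Base case $n=0$.} Here $E$ is a nonzero projective module, and we must show $\Hom_A(E,M)\neq 0$. Since $E \neq 0$ is finitely generated over the finite $R$-algebra $A$, Nakayama's lemma shows that $E/\rad E \neq 0$. Hence $E$ surjects onto some simple $A$-module $S$. By hypothesis, $M$ also surjects onto $S$, say via $M \to S$. Because $E$ is projective, the surjection $E \to S$ lifts through $M \to S$ to a map $E \to M$. This lift is nonzero since its composite with $M \to S$ is nonzero.

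\emph{Inductive step $n>0$.} Choose a projective cover, or just a surjection $P \to E$ from a finitely generated projective, and let $K$ be its kernel, so there is an exact sequence $0 \to K \to P \to E \to 0$ with $\text{pd}_A K = n-1$. The long exact sequence for $\Ext_A(?,M)$ contains
$$\Ext^{n-1}_A(P,M) \to \Ext^{n-1}_A(K,M) \to \Ext^n_A(E,M) \to \Ext^n_A(P,M)=0.$$
When $n \geq 2$, the left term vanishes, so $\Ext^n_A(E,M) \simeq \Ext^{n-1}_A(K,M)$, which is nonzero by induction.

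The case $n=1$ is the main obstacle, because there we must show that $\Hom_A(P,M) \to \Hom_A(K,M)$ is not surjective. We argue by contradiction, supposing $\Ext^1_A(E,M)=0$, and in this case we use the minimal projective resolution, so that $K \subseteq (\rad A) P$. The standard argument, as in \cite{AG}, runs as follows. Since $\text{pd}\, E = 1$, the module $K$ is projective and nonzero. By the base case, pick $g\colon K \to M$ which is nonzero modulo $\rad M$; concretely, compose $K \to K/\rad K \to S$ with a lift $K \to M$ of $S$ through $M \to S$. This $g$ would extend to some $h\colon P \to M$. But then
$$g(K) = h(K) \subseteq h((\rad A)P) \subseteq (\rad A) M,$$
contradicting the fact that the image of $g$ maps onto $S$ under $M \to M/\rad M \to S$. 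Here the map $M \to S$ kills $(\rad A) M$ because $\rad A$ annihilates simple modules.

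Finally, we note that minimality is needed only in the case $n=1$. Alternatively, we could use the minimal resolution throughout and apply the same argument at the last step. Also, since $A$ is semiperfect after completion, it is cleanest to first pass to $\hat{R}$ if $R$ is not complete; both $\Ext$ and the hypotheses are compatible with this flat base change.
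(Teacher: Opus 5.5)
Your proof is correct and is essentially the paper's argument. The paper completes so that $A$ is semiperfect, takes a minimal projective resolution, and shows $\Hom_A(P_{n-1},M)\to\Hom_A(P_n,M)$ is not surjective because the last differential has entries in $\text{rad}\, A$, while $M$ surjects onto the simple top of $P_n$; this is exactly your $n=1$ argument applied to the final step $0\to P_n\to P_{n-1}\to\Omega^{n-1}E\to 0$, and your dimension-shifting induction for $n\ge 2$ just repackages going directly to that step, as you note yourself.
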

\begin{proof}
Since completion is an exact functor, we may assume that $R$ is complete and hence, $A$ is semiperfect. Thus the projective covers of simple modules have the form $e_iA$ for some complete set of primitive idempotents $e_1, \ldots, e_s \in A$. Furthermore, we may consider a minimal projective resolution 
$$0 \to P_n \to P_{n-1} \to \ldots \to P_0 \to E \to 0.$$
Expressing the $P_j$'s as direct sums of $e_i A$'s, the differentials are given by matrices with entries in $e_{i_1} J e_{i_2}$ where $J = \text{rad}\, A$ and $i_1,i_2$ are appropriate indices which can vary. Let $\Phi$ be the matrix representing the map $P_n \to P_{n-1}$.  Then, expressing 
$\Hom_A(P_n,M)$ and $\Hom_A(P_{n-1},M)$ as direct sums of $Me_i$, the map $\Hom_A(P_{n-1},M) \to \Hom_A(P_n,M)$ is given by right multiplication by the matrix $\Phi$. Our assumption on $M$ ensures that $\Hom_A(P_n,M)$ has a non-zero summand $Me_j$ say. To show that $\Phi$ is not surjective and so prove the proposition, it suffices to show given any column vector $\Phi$ in $(Je_j)^{\oplus N}$, the induced map $M^{\oplus N} \to Me_j$ is not surjective. However, the cokernel of this map has $Me_j/MJe_j \simeq (M/MJ)e_j$ as a quotient which is non-trivial by our assumption on $M$.
\end{proof}

Finally we need a version of \cite[Proposition~4.9]{AG}. We omit the proof since Auslander-Goldman's proof extends without change to our setting. 
\begin{proposition}  \label{prop:Ext1EE}
Let $A$ be a Cohen-Macaulay $R$-algebra with finite global dimension equal to $\dim R$. If $E$ is a reflexive $A$-module with $\Hom_A(E,E)$ Cohen-Macaulay, then $\Ext^1_A(E,E) = 0$.
\end{proposition}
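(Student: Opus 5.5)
We need to prove: if $A$ is Cohen-Macaulay with $\text{gl.dim}\, A = \dim R = d$, and $E$ is a reflexive $A$-module with $\Gamma := \Hom_A(E,E)$ Cohen-Macaulay, then $\Ext^1_A(E,E)=0$. The plan is to follow Auslander--Goldman's local cohomology/depth-counting argument, inducting on $d$ and localising at non-maximal primes.

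\textbf{Reduction to the punctured spectrum.} First reduce to $d\ge 3$. When $d\le 2$, reflexive $R$-modules over a normal domain are Cohen-Macaulay, so $E$ is a Cohen-Macaulay $A$-module. Since $\text{gl.dim}\, A = d$, Proposition~\ref{prop:AuslanderBuchsbaum} gives $\text{pd}\, E = 0$, so $E$ is projective and $\Ext^1_A(E,E)=0$.

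For general $d$, induct on $d$. Localising at a non-maximal prime $\frakp$, the algebra $A_\frakp$ is again Cohen-Macaulay of global dimension $\dim R_\frakp$: finite global dimension localises, and Cohen-Macaulay-ness forces equality with the dimension via Proposition~\ref{prop:AuslanderBuchsbaum}. Also $E_\frakp$ is reflexive and $\Hom(E_\frakp,E_\frakp)=\Gamma_\frakp$ is Cohen-Macaulay. So by induction $\Ext^1_A(E,E)$ is supported at $\frakm$, i.e.\ has finite length.

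\textbf{Depth counting.} Take a presentation $0\to K\to P\to E\to 0$ with $P$ projective, and apply $\Hom_A(-,E)$:
$$0\to \Gamma \to \Hom_A(P,E)\to \Hom_A(K,E)\to \Ext^1_A(E,E)\to 0.$$
Here $\Hom_A(P,E)$ is a summand of $E^{\oplus m}$, so it is reflexive and has depth $\ge 2$. The module $\Hom_A(K,E)$ is a Hom into a reflexive module, so it is reflexive as well, with depth $\ge 2$. Split the sequence as
$$0\to\Gamma\to\Hom_A(P,E)\to C\to 0,\qquad 0\to C\to \Hom_A(K,E)\to \Ext^1_A(E,E)\to 0.$$
Since $\Gamma$ has depth $d\ge 3$ and $\Hom_A(P,E)$ has depth $\ge 2$, the local cohomology sequence gives $H^0_\frakm(C)=H^1_\frakm(C)=0$.

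The finite-length module $\Ext^1_A(E,E)$ equals its own $H^0_\frakm$. From the second sequence, $H^0_\frakm(\Hom_A(K,E))=0$ injects into $H^0_\frakm(\Ext^1)$, which maps into $H^1_\frakm(C)=0$. Hence $H^0_\frakm(\Ext^1_A(E,E))=0$, so $\Ext^1_A(E,E)=0$.

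\textbf{Main obstacle.} The genuinely noncommutative points are two. The first is that localisation preserves the hypotheses, in particular that $\text{gl.dim}\, A_\frakp=\dim R_\frakp$. I would derive this from Proposition~\ref{prop:AuslanderBuchsbaum} together with Ramras's result that Cohen-Macaulay modules are projective. The second is that Hom modules into reflexive modules are reflexive. This holds because reflexivity over normal $R$ is equivalent to being torsion-free and satisfying $S_2$, and both properties pass to $\Hom$. Everything else is the commutative argument verbatim.
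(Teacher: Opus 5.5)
Your route is the standard Auslander--Goldman depth argument, which is what the paper defers to. You induct on $d$ so that $\Ext^1_A(E,E)$ has finite length, then chase depths through $0\to\Gamma\to\Hom_A(P,E)\to\Hom_A(K,E)\to\Ext^1_A(E,E)\to 0$. The depth chase and the case $d\le 2$ are fine.

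The gap is the step you flagged: $\text{gl.dim}\,A_{\mathfrak p}=\dim R_{\mathfrak p}$. Both your inductive hypothesis at $\mathfrak p$ and, ultimately, ``reflexive $\Rightarrow$ MCM $\Rightarrow$ projective over $A_{\mathfrak p}$'' depend on it. Your justification is circular, because Proposition~\ref{prop:AuslanderBuchsbaum} and Ramras's result both \emph{assume} that the global dimension equals $\dim R$. Moreover, the principle you invoke is false: Cohen--Macaulayness plus finite global dimension does not force $\text{gl.dim}=\dim R$. For a DVR $R$ with uniformiser $t$, take $\Lambda=\begin{pmatrix}R&R\\0&R\end{pmatrix}$, which is $R$-free of global dimension $2>\dim R$. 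The simple $S_1=e_{11}\Lambda/e_{11}\,\text{rad}\,\Lambda$ has minimal resolution $0\to e_{22}\Lambda\to e_{11}\Lambda\oplus e_{22}\Lambda\to e_{11}\Lambda\to S_1\to 0$. In particular the MCM module $e_{11}\,\text{rad}\,\Lambda$ is not projective.

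The statement you need is true, but it requires its own argument. In Auslander--Goldman's commutative setting it is Serre's theorem that regularity localises. One way to prove it here:
\begin{enumerate}
\item $\text{gl.dim}\,A=d$ forces every MCM $A$-module $X$ to be projective. Suppose $\text{pd}\,X=n\ge 1$, and let $\mathbf x$ be a system of parameters, which is $X$-regular. Applying Nakayama to $\Ext^n_A(X,-)$ repeatedly gives $\text{pd}\,X/\mathbf xX=n+d>d$, a contradiction.
\item Assume $R$ is Cohen--Macaulay with canonical module $\omega$, as for $R=k[[u,v,w]]$ in the applications. Then $\Hom_R(A,\omega)$ is MCM, hence projective by step 1.
\item Dualise a projective resolution of $\Hom_{R_{\mathfrak p}}(X,\omega_{\mathfrak p})$. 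This gives any MCM $A_{\mathfrak p}$-module $X$ an arbitrarily long coresolution by projectives. So $X$ is a $d$-th syzygy, hence projective because $\text{gl.dim}\,A_{\mathfrak p}\le d$.
\item Let $h=\dim R_{\mathfrak p}$. By the depth lemma every $h$-th syzygy over $A_{\mathfrak p}$ is MCM, hence projective. Simples have grade $h$, so $\text{gl.dim}\,A_{\mathfrak p}=h$.
\end{enumerate}
With this lemma inserted, your induction goes through.
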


\begin{proposition}  \label{prop:NCCR}
Let $R$ be a commutative normal noetherian local domain of dimension 3 and $A$ be a Cohen-Macaulay $R$-algebra with finite global dimension $3$. Let $E$ be a finite $A$-module such that 
\begin{enumerate}
    \item $\Hom_A(E,E)$ is Cohen-Macaulay
    \item every simple $A$-module appears as a quotient of $E$. 
\end{enumerate}
Then $E$ is projective iff it is reflexive. 
\end{proposition}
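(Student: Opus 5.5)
The forward direction is immediate: a projective $A$-module is a summand of some $A^{\oplus m}$, and $A$ is Cohen-Macaulay of dimension $3$, hence reflexive over the normal domain $R$. So the work is in showing that a reflexive $E$ is projective. I would follow the strategy of \cite[Section~4]{AG}: use Proposition~\ref{prop:Ext1EE} to kill $\Ext^1_A(E,E)$, bound the projective dimension of $E$ by depth via Proposition~\ref{prop:AuslanderBuchsbaum}, and then use Proposition~\ref{prop:detectProjDim} to show any nonzero projective dimension would force $\Ext^1_A(E,E)\neq 0$.

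The steps are as follows.
\begin{enumerate}
\item Since $\text{gl.dim} A = 3$, the module $E$ has finite projective dimension.
\item Over the normal domain $R$ of dimension $3$, a reflexive module satisfies Serre's condition $S_2$. As $\dim R = 3 \geq 2$, this gives $\text{depth}\, E \geq 2$.
\item Proposition~\ref{prop:AuslanderBuchsbaum} then gives $\text{pd}\, E = 3 - \text{depth}\, E \leq 1$.
\item Suppose $\text{pd}\, E = 1$. By hypothesis (2), every simple $A$-module is a quotient of $E$, so Proposition~\ref{prop:detectProjDim} applied with $M = E$ and $n=1$ gives $\Ext^1_A(E,E) \neq 0$.
\item On the other hand, $E$ is reflexive and $\Hom_A(E,E)$ is Cohen-Macaulay by hypothesis (1), so Proposition~\ref{prop:Ext1EE} gives $\Ext^1_A(E,E) = 0$.
\item Steps 4 and 5 contradict each other, so $\text{pd}\, E = 0$ and $E$ is projective.
\end{enumerate}

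The step I expect to be the main obstacle is step 2, the depth bound for reflexive modules. One has to check it applies to $E$ viewed as an $R$-module, and handle the degenerate case $E=0$, which is trivially projective. For the bound itself, I would write $E \simeq \Hom_R(\Hom_R(E,R),R)$ and choose a presentation $F_1 \to F_0 \to \Hom_R(E,R) \to 0$ with $F_0,F_1$ free. Dualising realises $E$ as the kernel of a map $F_0^* \to F_1^*$ between free modules. The depth lemma then gives depth at least $\min(3,2)=2$.

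All the genuinely noncommutative input, namely Ramras's result that maximal Cohen-Macaulay modules are projective and the minimal-resolution argument, is already packaged in the earlier propositions. So beyond this step the proof should be a short assembly.
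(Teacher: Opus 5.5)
Your proposal is correct and matches the paper's proof step for step: reflexivity gives $\text{depth}\, E \geq 2$, Proposition~\ref{prop:AuslanderBuchsbaum} then forces $\text{pd}\, E \leq 1$, and $\text{pd}\, E = 1$ is ruled out by setting Proposition~\ref{prop:detectProjDim} (with $M = E$) against Proposition~\ref{prop:Ext1EE}. One small correction to your step~2: the free modules $F_0^*, F_1^*$ have depth equal to $\text{depth}\, R$, and normality of $R$ (Serre's $S_2$) only guarantees this is at least $2$, not $3$, so the depth lemma gives $\text{depth}\, E \geq \min(\text{depth}\, R, 2) = 2$, which is the same conclusion.
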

\begin{proof}
Only the reverse implication is not clear so we assume $E$ is reflexive and hence $\text{depth}\, E \geq 2$. By the Auslander-Buchsbaum formula in Proposition~\ref{prop:AuslanderBuchsbaum}, we have $\text{pd}\, E \leq 1$. Suppose by way of contradiction that $\text{pd}\, E = 1$. Then Proposition~\ref{prop:detectProjDim} shows that $\Ext^1_A(E,E) \neq 0$ contradicting Proposition~\ref{prop:Ext1EE}. 
\end{proof}

\begin{remark}
In \cite[Theorem~4.4]{AG} which inspired this result, there is no dimension restriction. We cannot recover Auslander-Goldman's result in this setting as they use induction via generic hyperplane cuts. However, these may not preserve finite global dimension in the noncommutative case. 
\end{remark}

\begin{corollary}   \label{cor:unique}
Let $A$ be a Cohen-Macaulay order in a central simple algebra $Q$ with global dimension 3 over a commutative noetherian normal local domain $R$ of dimension 3. If $A$ is local in the sense that $A/\text{rad}\, A$ is simple, then any Cohen-Macaulay maximal order $B$ in $Q$ is Morita equivalent to $A$. 
\end{corollary}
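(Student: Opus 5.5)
Let $B$ be a Cohen--Macaulay maximal order in $Q$. Following Auslander--Goldman, the plan is to build a reflexive $A$-module $E$ with $B \simeq \End_A(E)$, apply Proposition~\ref{prop:NCCR} to show $E$ is projective, and then deduce the Morita equivalence from the locality of $A$.

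First we construct $E$. Set $E := (AB)^{**}$, the reflexive hull of the $A$-$B$-sub-bimodule $AB \subseteq Q$. This is a finitely generated reflexive $R$-module which is a left $A$-module and a right $B$-module. Right multiplication gives $B \to \End_A(E)^{\mathrm{op}}$, and since $\End_A(E)$ is an order in $Q$ containing $B$, maximality of $B$ forces $B = \End_A(E)$ (suitably opposite). Here we use that $\End_A(E) \subseteq \End_K(KE)=Q$ is again an $R$-order because $E$ is a full lattice. So hypothesis (1) of Proposition~\ref{prop:NCCR}, that $\Hom_A(E,E)\simeq B$ is Cohen--Macaulay, holds by assumption on $B$.

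Next we check hypothesis (2). Since $A$ is local, $A/\rad A$ is simple, so there is exactly one simple $A$-module $S$ up to isomorphism. As $E\neq 0$ is finitely generated, Nakayama gives $E/(\rad A)E \neq 0$, and this is a nonzero semisimple $A/\rad A$-module, so $S$ is a quotient of $E$. Because $E$ is reflexive, Proposition~\ref{prop:NCCR} now shows $E$ is projective. Over the semiperfect completion, or directly since $A$ is local, every projective is a direct sum of copies of the unique indecomposable projective $P$, with $A \simeq P^{m}$. Hence $E \simeq P^{s}$ is a progenerator: it is projective, and $P$ is a summand of it, which is all we need since $A$ is a direct sum of copies of $P$. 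Therefore $B \simeq \End_A(E)$ is Morita equivalent to $A$.

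The main obstacle is the bookkeeping in the first step, namely choosing sides (left versus right, and opposite rings) so that $\End_A(E)$ genuinely sits inside $Q$ and contains $B$. A second issue is that Proposition~\ref{prop:NCCR} requires finite global dimension exactly $3=\dim R$ together with Cohen--Macaulayness of $A$, both of which are given. The one subtle point is that maximality of $B$ is needed, whereas $A$ itself need not be maximal. If projectivity is needed over a non-complete $R$, I would pass to the completion, which preserves all the hypotheses and detects projectivity.
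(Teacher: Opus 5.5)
Your proposal is correct and is essentially the paper's argument: the paper takes the reflexive hull of the $(B,A)$-bimodule $BA$ rather than $AB$, identifies $B$ with $\End_A E$ by maximality, and applies Proposition~\ref{prop:NCCR}, leaving implicit the checks of hypothesis (2) and of the generator property that you spell out. Two small slips: $\End_A(E)$ sits in $\End_{KA}(KE)=\End_Q(Q)\cong Q^{\mathrm{op}}$, not in $\End_K(KE)$; and without completing, $A$ need not be semiperfect, so the decomposition into copies of one indecomposable projective may fail, but a nonzero projective $E$ is still a generator because its trace ideal cannot lie in $\rad A$.
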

\begin{proof}
We can consider the $(B,A)$-bimodule $BA \subset Q$. Let $E$ be its $R$-reflexive hull in $Q$ which is still a $(B,A)$-bimodule so there is a natural injective ring homomorphism $B \hookrightarrow \End_A E$. Since $\End_A E$ is also an order in $Q$, maximality ensures that $B \simeq \End_A E$. Thus the hypotheses of Proposition~\ref{prop:NCCR} hold and $E$ is a projective generator. 
\end{proof}
\begin{remark}  \label{rem:typeAunique}
The corollary applies to the terminal orders described in the $n=1$ case of Corollary~\ref{cor:constructTypeA}. 
\end{remark}

\bibliographystyle{amsalpha}

\bibliography{references}

\end{document}